\newcommand{\ts}{\textsuperscript}
\def \a {{\mathfrak{a}}}
\def \g {{\mathfrak{g}}}
\def\NAT@def@citea{\def\@citea{\NAT@separator}}
\theoremstyle{plain}
\newtheorem{theorem}{Theorem}[section]
\newtheorem{lemma}[theorem]{Lemma}
\newtheorem{corollary}[theorem]{Corollary}
\theoremstyle{definition}
\newtheorem{example}[theorem]{Example}
\theoremstyle{remark}
\newtheorem{remark}{Remark}
\newtheorem{question}{Question}
\begin{document}


\title{Exact sequences in the cohomology of a Lie superalgebra extension}

\author{
\name{Samir Kumar Hazra and Amber Habib\thanks{CONTACT Amber Habib. Email: amber.habib@snu.edu.in}}
\affil{ Department of Mathematics, School of Natural Sciences, Shiv Nadar University, Uttar Pradesh 201314, India}
}

\maketitle

\begin{abstract}
Let $  0\rightarrow \mathfrak{a} \xrightarrow{} \mathfrak{e} \xrightarrow{} \mathfrak{g} \to 0$ be an abelian extension of the Lie superalgebra $\mathfrak{g}$. In this article we consider the problems of extending  endomorphisms of $\mathfrak{a}$ and lifting endomorphisms of $\mathfrak{g}$ to certain endomorphisms of $\mathfrak{e}$. We connect these problems to the cohomology of $\mathfrak{g}$ with coefficients in $\mathfrak{a}$ through construction of two exact sequences, which is our main result, involving various endomorphism groups and the second cohomology. The first exact sequence is obtained using the Hochschild-Serre spectral sequence corresponding to the above extension while to prove the second we rather take a direct approach. As an application of our results we obtain a description of certain automorphism groups of semidirect product Lie superalgebras.
\end{abstract}

\begin{keywords}
Lie superalgebras; Extensions; Cohomology; Hochschild-Serre spectral sequence
\end{keywords}

 \begin{amscode}
 17B40; 17B56; 18G40
 \end{amscode}

\section{Introduction}

Let 
\begin{equation}
\label{extension}
    0\rightarrow \mathfrak{a} \xrightarrow{i} \mathfrak{e} \xrightarrow{p} \mathfrak{g} \to 0
\end{equation}
 be an abelian extension of a Lie superalgebra $\mathfrak{g}$ by $\mathfrak{a}$, so $\mathfrak{a}$ is abelian. Corresponding to this extension, some  problems of extension and lifting of automorphisms have been considered by the authors in \cite{HH}. In particular, under what conditions can an automorphism of $\mathfrak{a}$ be extended to certain automorphism of $\mathfrak{e}$ or when can an automorphism of $\mathfrak{g}$ be lifted to certain automorphism of $\mathfrak{e}$? The work in this direction resulted in finding the following two exact sequences\cite[Theorem 4.6]{HH} from which one would obtain some necessary and sufficient conditions for the above problems:

\begin{equation}
 \label{seq1}
     1\rightarrow Aut^{\mathfrak{a},\mathfrak{g}}(\mathfrak{e})\xrightarrow{\iota} Aut^{\mathfrak{g}}_{\mathfrak{a}}(\mathfrak{e}) \xrightarrow{\tau_1} \mathcal{C}_1\xrightarrow{\lambda_1}H^2(\mathfrak{g},\mathfrak{a})_0 \\
 \end{equation}
\begin{equation}
\label{seq2}
     1\rightarrow Aut^{\mathfrak{a},\mathfrak{g}}(\mathfrak{e})\xrightarrow{\iota} Aut^{\mathfrak{a}}(\mathfrak{e}) \xrightarrow{\tau_2} \mathcal{C}_2\xrightarrow{\lambda_2}H^2(\mathfrak{g},\mathfrak{a})_0\\
\end{equation}
There, a more general problem was also considered. Namely, under what conditions can a pair of automorphisms $(\phi,\psi) \in Aut(\mathfrak{a}) \times Aut(\mathfrak{g})$ be inducible? To this end another exact sequence\cite[Theorem 5.2]{HH}, which resembles the Wells exact sequence for groups established in \cite{Wells}, was obtained from which some necessary and sufficient conditions could be given for the inducibility of the pair $(\phi,\psi)$. For details on these problems and the notation used above we refer to \cite{HH}. This study was motivated by similar problems in group theory considered in \cite{Passi} and some references therein. For a historical view of the Wells exact sequence for groups and various related problems we refer to the article \cite{Jill}. Some authors have also considered the Wells sequence in context of several other algebraic structures, like for Lie rings in \cite{Jamali} and \cite{Jamali2}, for Lie algebras in \cite{Singh}, for 3-Lie algebras in \cite{Xu} and for Lie coalgebras in \cite{Du}. In the last of these articles the extension problem for coderivations is also considered and a Wells-like exact sequence for coderivations is obtained along with that for Lie coalgebra automorphisms. The extension problems of derivations for the Lie algebras is considered in \cite{Barati}.

 A natural question that comes to mind at this point is whether we can extend the above results to the level of endomorphisms instead of just restricting to the case of automorphisms, in the present article we consider this line of investigation. In group theory, analogous problems were considered in \cite{Mariam} and it was claimed that there findings generalize the similar exact sequences for automorphism groups established in \cite[Theorem 1]{Passi}. But the article \cite{Mariam} lacks any description of certain maps involved, namely the map $\eta$ in \cite[Corollary 2]{Mariam} and $\bar{\eta}$ in \cite[Corollary 4]{Mariam}, which is necessary to validate their claim. Also, the proofs in that article uses a well-known bijection between the sets of endomorphisms and derivations of a group\cite[Lemma 8]{Mariam}. However we do not know of any such bijection for Lie superalgebras in general, we rather prove the required bijections separately. In this article we prove for Lie superalgebras the following two exact sequences similar to \eqref{seq1} and \eqref{seq2} and provide the explicit description of the maps involved. Our description of the maps would shed some light on the above said maps of \cite{Mariam}. For the notation used here we refer to Section \ref{section2}: 
 \begin{equation}
 \label{seq3}
     1\rightarrow End^{\mathfrak{a},\mathfrak{g}}(\mathfrak{e})\xrightarrow{i} End^{\mathfrak{g}}_{\mathfrak{a}}(\mathfrak{e}) \xrightarrow{\widetilde{res}} End_\mathfrak{g}(\mathfrak{a})\xrightarrow{d}H^2(\mathfrak{g},\mathfrak{a})_0 
     \end{equation}
     \begin{equation}
     \label{seq4}
      1\rightarrow End^{\mathfrak{a},\mathfrak{g}}(\mathfrak{e})\xrightarrow{i} End^{\mathfrak{a}}(\mathfrak{e}) \xrightarrow{\sigma} End^\mathfrak{a}(\mathfrak{g})\xrightarrow{\chi}H^2(\mathfrak{g},\mathfrak{a})_0 
     \end{equation}
Here one could just take the corresponding endomorphism groups, consider appropriate maps between them and check for exactness one by one at each stage as was done earlier in \cite{HH} for Lie superalgebras and in \cite{Passi} for groups. But here what is really worth noticing is that we obtain \eqref{seq3} as a consequence of the 5-term exact sequence corresponding to the well-known Hochschild-Serre spectral sequence for the Lie superalgebra extension \eqref{extension}, thus making \eqref{seq3} very special.

Another interesting question one would ask here is whether the above exact sequence \eqref{seq3} is a true generalization of \eqref{seq1} in the sense that \eqref{seq1} can be deduced from \eqref{seq3} in a suitable way. This is not so obvious, as the maps in \eqref{seq3} are the group homomorphisms of the abelian additive structure, they can not be just restricted to the set of automorphisms to produce group homomorphisms between the corresponding automorphism groups where the group operation is composition of maps. To overcome this we use the notion of quasiregular elements of rings and deduce \eqref{seq1} from \eqref{seq3} where the ring structures of $End^{\mathfrak{a},\mathfrak{g}}(\mathfrak{e})$ and $End_\mathfrak{g}(\mathfrak{a})$ are the usual ones but that of $End^{\mathfrak{g}}_{\mathfrak{a}}(\mathfrak{e})$, which is very different from the usual one, comes from the ring structure of $Z^1(\mathfrak{e}, \mathfrak{a})_0$ via a bijection $Z^1(\mathfrak{e}, \mathfrak{a})_0 \simeq End^{\mathfrak{g}}_{\mathfrak{a}}(\mathfrak{e})$. Thus the exact sequence \eqref{seq1} will turn out to be a consequence of the Hochschild-Serre spectral sequence, this interesting fact was not known earlier. 
To prove \eqref{seq4} we rather take a direct approach and using the monoidal structures of the involved groups we obtain \eqref{seq2} as a corollary. At the end we give a description of certain automorphism groups of semidirect product of Lie superalgebras.

The article is organized as follows. In Section \ref{section2} we present various definitions and set up some notation which are needed throughout the article. Some basic notions whose definitions are not given here are defined in \cite{HH}. Here we also prove an important lemma which will be the main ingredient for the proof of Theorem \ref{1sttheorem}. Theorem \ref{1sttheorem} is our first main theorem, this is stated and proved in Section \ref{section3} establishing the first exact sequence \eqref{seq3}. Then we provide some corollaries of the theorem of which Corollary \ref{corollary} is particularly important. This corollary is proved using the notion of quasiregular elements which is also introduced here. The another exact sequence \eqref{seq4} is proved in Theorem \ref{Theorem2}, along with various corollaries of it, in Section \ref{section 4}. The last section which is Section \ref{section 5} mainly deals with an application of our results to the semidirect product Lie superalgebras. Here we prove a couple of group isomorphisms which describe certain automorphism groups of semidirect products. The section ends with two examples. To avoid unnecessary complications we shall assume the underlying field $F$ to be of characteristic $0$.

\section{Preliminaries}
\label{section2}

We refer to \cite[Section 2]{HH} for definitions of Lie superalgebras, modules over Lie superalgebras and related notions. A linear map $\phi: \mathfrak{g}(={\mathfrak{g}_0 \oplus \mathfrak{g}_1})\to \mathfrak{h}(=\mathfrak{h}_0 \oplus \mathfrak{h}_1)$ is said to be homogeneous of degree $0$ if $\phi(\mathfrak{g}_i)\subseteq \mathfrak{h}_i $ for $i \in \{0,1\}$.  We also call such a map an even map in short. Then $\phi$ is called a Lie superalgebra homomorphism if $\phi$ is even and $\phi([x,y])=[\phi(x), \phi(y)]$ for all $x,y \in \mathfrak{g}$. As usual, for a Lie superalgebra $\mathfrak{e}$ we define $End(\mathfrak{e})$ to be the set of all Lie superalgebra homomorphisms from $\mathfrak{e}$ to itself. Also for an ideal $\mathfrak{a}\subseteq \mathfrak{e}$ define $$End_\mathfrak{a}(\mathfrak{e}):= \left\{ \phi \in End(\mathfrak{e}) : \phi(\mathfrak{a})\subseteq \mathfrak{a}  \right\}.$$ 
Now consider the extension \eqref{extension} and let $\phi \in End_\mathfrak{a}(\mathfrak{e})$. Then $\phi$ induces two maps $\phi|_\mathfrak{a}\in End(\mathfrak{a})$, the restriction of $\phi$ to $\mathfrak{a}$ and $\tilde{\phi} \in End(\mathfrak{g})$ defined by $\tilde{\phi}(x):= p\phi s(x)$ where $s$ is any section of the map $p$ i.e. $s:\mathfrak{g}\to \mathfrak{e}$ is linear satisfying $ps=1$. We shall fix such a section $s$ throughout the article and without loss of generality we shall take our section to be an even map. Now it can be checked that $\tilde{\phi}$ is well-defined, that is it does not depend on the choice of the section. Clearly the maps $\phi|_\mathfrak{a}$ and $\tilde{\phi}$ can be characterized by the following commutative diagram:

\begin{center}
\begin{tikzcd}
   0\arrow{r} & \mathfrak{a}\arrow{r}{i}\arrow[dotted]{d}{\phi|_\mathfrak{a}} & \mathfrak{e}\arrow{r}{p}\arrow{d}{\phi} & \mathfrak{g}\arrow{r}\arrow[dotted]{d}{\tilde{\phi}} & 0 \\
  0\arrow{r} & \mathfrak{a}\arrow{r}{i} & \mathfrak{e}\arrow{r}{p} & \mathfrak{g}\arrow{r} & 0 
\end{tikzcd}
\end{center}

Define $$End^{\mathfrak{a},\mathfrak{g}}(\mathfrak{e}):= \left\{\phi \in End_\mathfrak{a}(\mathfrak{e}): \phi|_\mathfrak{a}=id \text{ and } \tilde{\phi}=id \right\}$$ then $5$-lemma implies $End^{\mathfrak{a},\mathfrak{g}}(\mathfrak{e}) = Aut^{\mathfrak{a},\mathfrak{g}}(\mathfrak{e})$. Also define $$End_\mathfrak{a}^{\mathfrak{g}}(\mathfrak{e}):= \left\{\phi \in End_\mathfrak{a}(\mathfrak{e}): \tilde{\phi}=id \right\}$$ and $$End^\mathfrak{a}(\mathfrak{e}):= \left\{\phi \in End_\mathfrak{a}(\mathfrak{e}):{\phi}|_\mathfrak{a}=id \right\}$$

Now for an abelian extension \eqref{extension} there is an induced well-defined action of $\mathfrak{g}$ on $\mathfrak{a}$ given by $g\cdot a:= [s(g), a]$ where $s$ is a section of $p$, $g\in \mathfrak{g}$ and $a\in \mathfrak{a}$. Therefore we can talk about the cohomology groups $H^n(\mathfrak{g}, \mathfrak{a})$. See \cite[Section 2.1]{HH} for details on constructions, descriptions and gradings of low dimensional cohomologies. We also set $$End^\mathfrak{a}(\mathfrak{g}):= \left\{\psi \in End(\mathfrak{g}): \psi(g) \cdot a=g\cdot a \, \forall g\in \mathfrak{g}, a \in \mathfrak{a} \right\}$$ which is the set of all endomorphisms of $\mathfrak{g}$ which does not alter the action of $\mathfrak{g}$ on $\mathfrak{a}$. 

Let us again consider the extension \eqref{extension} and let $M$  be an $\mathfrak{e}$-module. Then it is well-known that there is a corresponding Hochschild-Serre spectral sequence whose $(p,q)$-term on the $2\ts{nd}$-page is given by $E^{p,q}_2=H^p(\mathfrak{g}, H^q(\mathfrak{a}, M))$ converging to $H^{p+q}(\mathfrak{e}, M)$, see \cite[Section 16.6.]{Musson}. In our case the module is $\mathfrak{a}$ with the adjoint action of $\mathfrak{e}$ on it. Clearly with this action $\mathfrak{a}$ is $\mathfrak{a}$-invariant, i.e, $\mathfrak{a}^\mathfrak{a}=\mathfrak{a}$. Here for a $\mathfrak{g}$-module $M$, $M^\mathfrak{g}$ denotes the set of invariants of the action defined by $M^\mathfrak{g}:=\{m\in M | g\cdot m=0\, \forall g\in \mathfrak{g}\}$. Then the $5$-term exact sequence corresponding to the above spectral sequence becomes 
\begin{equation}
\label{5term}
    0\to H^1(\mathfrak{g}, \mathfrak{a}) \xrightarrow{inf} H^1(\mathfrak{e}, \mathfrak{a}) \xrightarrow{res} {H^1(\mathfrak{a}, \mathfrak{a})}^\mathfrak{g}\xrightarrow{d} H^2(\mathfrak{g}, \mathfrak{a}) \xrightarrow{inf} H^2(\mathfrak{e}, \mathfrak{a})
\end{equation}
where $inf$ and $res$ are the inflation and restriction maps respectively and $d$ is induced by the spectral sequence. Since all the maps involved here respect the grading on cohomology groups we immediately get 
\begin{equation}
\label{even5term}
    0\to H^1(\mathfrak{g}, \mathfrak{a})_0 \xrightarrow{inf} H^1(\mathfrak{e}, \mathfrak{a})_0 \xrightarrow{res} {H^1(\mathfrak{a}, \mathfrak{a})}_0^\mathfrak{g}\xrightarrow{d} H^2(\mathfrak{g}, \mathfrak{a})_0 \xrightarrow{inf} H^2(\mathfrak{e}, \mathfrak{a})_0
\end{equation}
Now let $\mathfrak{g}$ be a Lie superalgebra and $M=M_0\oplus M_1$ be a module over $\mathfrak{g}$. An even linear map $f:\mathfrak{g}\to M$ is said to be a derivation if 
\begin{equation}
\label{derivation}
    f([x,y])= x\cdot f(y) - (-1)^{|x||y|} y\cdot f(x) \text{ for all homogeneous } x,y \in \mathfrak{g}
\end{equation}
where $|\cdot|$ denotes the homogeneous degree. These are also the even $1$-cocycles whose collection is denoted by $Z^1(\mathfrak{g},M)_0$. If in particular $f$ is given by $f(x)=x\cdot m$ for some $m\in M_0$ we call $f$ an inner derivation, clearly such an $f$ is even. Let us denote of the set all the derivations by $Der(\mathfrak{g},M)$ and the inner derivations by $Inn(\mathfrak{g},M)$, so $Der(\mathfrak{g},M)=Z^1(\mathfrak{g},M)_0$ . Then it is well-known that $H^1(\mathfrak{g}, M)_0=\frac{Der(\mathfrak{g},M)}{Inn(\mathfrak{g},M)}$.

Now we prove the following lemma which will be used in Section \ref{section3}:

\begin{lemma}
\label{lemma reduced5term}
 \begin{equation}
 \label{lemma cocycle}
    0\to Z^1(\mathfrak{g}, \mathfrak{a})_0 \xrightarrow{inf} Z^1(\mathfrak{e}, \mathfrak{a})_0 \xrightarrow{res} End_\mathfrak{g}(\mathfrak{a})\xrightarrow{d} H^2(\mathfrak{g}, \mathfrak{a})_0 \xrightarrow{inf} H^2(\mathfrak{e}, \mathfrak{a})_0
\end{equation}
 is an exact sequence of abelian groups where $End_\mathfrak{g}(\mathfrak{a})$ denotes the set of $\mathfrak{g}$-module endomorphisms  of $\mathfrak{a}$.
\end{lemma}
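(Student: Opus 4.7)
My plan is to derive the sequence \eqref{lemma cocycle} directly from the even $5$-term exact sequence \eqref{even5term} by two modifications: identifying its middle term with $End_{\mathfrak{g}}(\mathfrak{a})$, and replacing the first two cohomology groups by the corresponding even cocycle groups. The last term and the map into it are unchanged, so exactness at $H^2(\mathfrak{g},\mathfrak{a})_0$ is inherited from \eqref{even5term} without further work.

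For the identification of the middle term, I shall use that $\mathfrak{a}$ is abelian, hence its adjoint action on itself is trivial. The derivation condition \eqref{derivation} for a map $\mathfrak{a}\to\mathfrak{a}$ then becomes vacuous, so $Z^1(\mathfrak{a},\mathfrak{a})_0$ coincides with the full space of even linear endomorphisms of $\mathfrak{a}$, while $B^1(\mathfrak{a},\mathfrak{a})_0 = 0$. Unwinding the residual $\mathfrak{g}$-action on $H^1(\mathfrak{a},\mathfrak{a})_0$ (which is induced from the adjoint $\mathfrak{e}$-action on $\mathfrak{a}$-cochains) shows that an even linear map $\phi$ is $\mathfrak{g}$-invariant if and only if $\phi(g\cdot a) = g\cdot \phi(a)$ for all $g\in\mathfrak{g}$ and $a\in\mathfrak{a}$, which is precisely the defining condition for $End_{\mathfrak{g}}(\mathfrak{a})$.

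For the passage from $H^1$ to $Z^1$, I rely on the fact that inflation and restriction come from cochain-level maps $f\mapsto f\circ p$ and $f\mapsto f|_\mathfrak{a}$ respectively, both of which preserve cocycles. Injectivity at $Z^1(\mathfrak{g},\mathfrak{a})_0$ is immediate from surjectivity of $p$. Exactness at $End_{\mathfrak{g}}(\mathfrak{a})$ reduces to exactness of \eqref{even5term} at the analogous spot together with surjectivity of the quotient $Z^1(\mathfrak{e},\mathfrak{a})_0 \to H^1(\mathfrak{e},\mathfrak{a})_0$, since the image of the cocycle-level restriction then coincides with the image of the cohomology-level restriction. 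The nontrivial step is exactness at $Z^1(\mathfrak{e},\mathfrak{a})_0$: given $f\in Z^1(\mathfrak{e},\mathfrak{a})_0$ with $f|_\mathfrak{a}=0$, I shall set $\bar{f} := f\circ s$ and show that $f = \bar{f}\circ p$ and that $\bar{f}$ is a derivation of $\mathfrak{g}$. The factorization is immediate from the vector-space decomposition $\mathfrak{e} = \mathfrak{a}\oplus s(\mathfrak{g})$ combined with $f|_\mathfrak{a}=0$, while the derivation property of $\bar{f}$ will be extracted from that of $f$ by writing the error term $\omega(g,h) := s([g,h]) - [s(g),s(h)] \in \mathfrak{a}$ and observing that $f$ annihilates it.

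The main technical obstacle is this descent step. The computation must correctly reproduce the signs in \eqref{derivation} when transferring the derivation identity from $\mathfrak{e}$ to $\mathfrak{g}$, using only the derivation property of $f$, the identity $g\cdot a = [s(g),a]$ defining the $\mathfrak{g}$-action on $\mathfrak{a}$, and the vanishing of $f$ on the $2$-cocycle error $\omega$.
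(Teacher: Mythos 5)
Your proposal is correct and follows essentially the same route as the paper: both derive \eqref{lemma cocycle} from the even $5$-term sequence \eqref{even5term} by using the abelianness of $\mathfrak{a}$ to identify $H^1(\mathfrak{a},\mathfrak{a})_0^{\mathfrak{g}}$ with $End_{\mathfrak{g}}(\mathfrak{a})$ (trivial inner derivations, vacuous cocycle condition) and then lifting the first two terms from cohomology to cocycles via the surjectivity of $Z^1(\mathfrak{e},\mathfrak{a})_0\to H^1(\mathfrak{e},\mathfrak{a})_0$ and the vanishing of inner derivations on $\mathfrak{a}$. The only difference is that you spell out the descent step at $Z^1(\mathfrak{e},\mathfrak{a})_0$ (the factorization $f=\bar f\circ p$ through the $2$-cocycle error term), which the paper treats as immediate; your outline of that computation is sound.
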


\begin{proof}
As $\mathfrak{a}$ is abelian the adjoint action of $\mathfrak{a}$ on $\mathfrak{a}$ is trivial. So the only inner derivation is the trivial map and any even linear map $f:\mathfrak{a} \to \mathfrak{a}$ satisfies \eqref{derivation}. Therefore $H^1(\mathfrak{a},\mathfrak{a})_0^\mathfrak{g}={Z^1(\mathfrak{a},\mathfrak{a})}^\mathfrak{g}_0={End(\mathfrak{a})}^\mathfrak{g}$. Now, as ${End(\mathfrak{a})}^\mathfrak{g}=End_\mathfrak{g}(\mathfrak{a})$ we have $H^1(\mathfrak{a},\mathfrak{a})_0^\mathfrak{g}=End_\mathfrak{g}(\mathfrak{a})$. Also let $\phi \in  Z^1(\mathfrak{e}, \mathfrak{a})_0$. Then clearly $res(\phi)=\phi|_\mathfrak{a}=0$ if and only if $\phi$ can be quotiented out to a map in $Z^1(\mathfrak{g}, \mathfrak{a})_0$. It only remains to show the exactness at the third term. For that take two derivations $\phi,\psi \in Z^1(\mathfrak{e}, \mathfrak{a})_0$ which are cohomologous in $H^1(\mathfrak{e}, \mathfrak{a})_0$. Therefore $\phi-\psi=h$ where $h\in Inn(\mathfrak{e},\mathfrak{a})$ is an inner derivation. But then $res(\phi)=res(\psi)$ as $res(h)=h|_\mathfrak{a}=0$, $\mathfrak{a}$ being abelian. This proves $res(H^1(\mathfrak{e}, \mathfrak{a})_0)=res \left(Z^1(\mathfrak{e}, \mathfrak{a})_0\right)$. Consequently \eqref{lemma cocycle} is exact.
\end{proof}

\section{Construction of the first exact sequence} 
\label{section3}
In this section we are going to prove our first main result establishing an exact sequence which we have already seen in \eqref{seq3}.

\begin{theorem}
\label{1sttheorem}
The following is an exact sequence of (additive) abelian groups: 
\begin{equation}
\label{Theorem1}
     1\to End^{ \mathfrak{a},\mathfrak{g}}(\mathfrak{e}) \xrightarrow{i} End^{\mathfrak{g}}_\mathfrak{a}(\mathfrak{e})\xrightarrow{\widetilde{res}} End_\mathfrak{g}(\mathfrak{a})\xrightarrow{d} H^2(\mathfrak{g}, \mathfrak{a})_0.
\end{equation}
Moreover the first three terms have certain multiplicative structures with respect to which they are rings and all the maps involved are ring homomorphisms except $d$ which is just a group map.
\end{theorem}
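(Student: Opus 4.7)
The plan is to deduce Theorem \ref{1sttheorem} from Lemma \ref{lemma reduced5term} by exhibiting natural bijections between the endomorphism sets of \eqref{Theorem1} and the cocycle spaces of \eqref{lemma cocycle}, and then transporting all algebraic structures. The central construction is $\phi \mapsto f_\phi$ with $f_\phi(x) := \phi(x) - x$. For $\phi \in End^{\mathfrak{g}}_\mathfrak{a}(\mathfrak{e})$, the hypothesis $\tilde\phi = id$ gives $p(f_\phi(x)) = 0$, so $f_\phi$ maps $\mathfrak{e}$ into $\mathfrak{a}$; expanding $\phi([x,y]) = [\phi(x), \phi(y)]$ and using $[\mathfrak{a}, \mathfrak{a}] = 0$ shows $f_\phi$ satisfies \eqref{derivation}, hence lies in $Z^1(\mathfrak{e}, \mathfrak{a})_0$. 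The inverse is $f \mapsto id_\mathfrak{e} + f$, which is readily verified to be an even Lie superalgebra endomorphism that preserves $\mathfrak{a}$ and induces the identity on $\mathfrak{g}$. This yields a bijection $End^{\mathfrak{g}}_\mathfrak{a}(\mathfrak{e}) \leftrightarrow Z^1(\mathfrak{e}, \mathfrak{a})_0$ which, under the additional condition $\phi|_\mathfrak{a} = id$ (equivalently $f_\phi|_\mathfrak{a} = 0$, so that $f_\phi$ factors through $p$), restricts to a parallel bijection $End^{\mathfrak{a},\mathfrak{g}}(\mathfrak{e}) \leftrightarrow Z^1(\mathfrak{g}, \mathfrak{a})_0$.

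Next I would identify the maps. The inclusion $i$ manifestly corresponds to inflation. For $\widetilde{res}$, the computation $\phi(a) = a + f_\phi(a)$ shows $\widetilde{res}(\phi) = \phi|_\mathfrak{a} = id_\mathfrak{a} + res(f_\phi)$, an affine shift of the cocycle-level restriction; a short check using $[\mathfrak{a}, \mathfrak{a}] = 0$ together with $\tilde\phi = id$ confirms that $\phi|_\mathfrak{a}$ is $\mathfrak{g}$-equivariant, so indeed lies in $End_\mathfrak{g}(\mathfrak{a})$. The map $d$ of \eqref{Theorem1} is defined so as to absorb this shift, namely $d(\theta) := d'(\theta - id_\mathfrak{a})$, where $d'$ is the transgression of \eqref{lemma cocycle}. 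The abelian group structures on the first three terms of \eqref{Theorem1} are the ones transported from the cocycle groups by the bijections above.

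Exactness of \eqref{Theorem1} then transfers directly from Lemma \ref{lemma reduced5term}: injectivity of $i$ follows from that of $inf$; $\ker(\widetilde{res}) = im(i)$ is the translation of $\ker(res) = im(inf)$ via the bijections; and $\ker(d) = \{\theta : \theta - id_\mathfrak{a} \in im(res)\} = id_\mathfrak{a} + im(res) = im(\widetilde{res})$. For the ring structures, $End_\mathfrak{g}(\mathfrak{a})$ carries its usual composition product, while the product on $End^{\mathfrak{g}}_\mathfrak{a}(\mathfrak{e})$ is transported from the associative multiplication $f_1 \cdot f_2 := f_1|_\mathfrak{a} \circ f_2$ on $Z^1(\mathfrak{e}, \mathfrak{a})_0$ — which remains a derivation because $f_1|_\mathfrak{a} \in End_\mathfrak{g}(\mathfrak{a})$ is $\mathfrak{g}$-equivariant — and the same recipe, applied to $Z^1(\mathfrak{g}, \mathfrak{a})_0$, equips $End^{\mathfrak{a},\mathfrak{g}}(\mathfrak{e})$ with its ring structure. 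That $i$ and $\widetilde{res}$ are ring homomorphisms is then a direct cocycle-level verification.

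The principal obstacle I anticipate is not the exactness — which is essentially gifted by Lemma \ref{lemma reduced5term} — but the careful bookkeeping around the affine $id_\mathfrak{a}$-shift linking $\widetilde{res}$ to the ordinary cocycle restriction, together with the verification that the unusual transported ring structure on $End^{\mathfrak{g}}_\mathfrak{a}(\mathfrak{e})$ aligns correctly with ordinary composition in $End_\mathfrak{g}(\mathfrak{a})$ under $\widetilde{res}$.
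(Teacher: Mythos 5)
Your overall route coincides with the paper's: the bijection $f\mapsto id+f$ between $Z^1(\mathfrak{e},\mathfrak{a})_0$ and $End^{\mathfrak{g}}_\mathfrak{a}(\mathfrak{e})$, the composition product on even cocycles (your justification via the $\mathfrak{g}$-equivariance of $f_1|_\mathfrak{a}$ is an acceptable substitute for the paper's direct computation), transport of structure, identification of $i$ with inflation, and deduction of exactness from Lemma \ref{lemma reduced5term}. The genuine gap is where you place the affine $id_\mathfrak{a}$-shift. You take $\widetilde{res}(\phi):=\phi|_\mathfrak{a}$ and compensate by replacing the transgression $d'$ of \eqref{lemma cocycle} with $d(\theta):=d'(\theta-id_\mathfrak{a})$, while keeping the usual addition and composition on $End_\mathfrak{g}(\mathfrak{a})$. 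With these choices the statement you are asked to prove fails: neither of your last two maps is a homomorphism of the abelian groups involved. Indeed $\widetilde{res}(f\boxplus g)=f|_\mathfrak{a}+g|_\mathfrak{a}-id_\mathfrak{a}\neq \widetilde{res}(f)+\widetilde{res}(g)$ in general, and $d(\theta_1+\theta_2)-d(\theta_1)-d(\theta_2)=d'(id_\mathfrak{a})=-[\beta]$, which is nonzero precisely when the extension does not split. Moreover, exactness at the second term in the group-theoretic sense breaks down: the kernel of your $\widetilde{res}$ (the preimage of the zero map of $End_\mathfrak{g}(\mathfrak{a})$) is $\{\phi:\phi|_\mathfrak{a}=0\}$, not $End^{\mathfrak{a},\mathfrak{g}}(\mathfrak{e})$; your ``translation'' of $\ker(res)=\mathrm{im}(inf)$ silently moves the basepoint from $0$ to $id_\mathfrak{a}$, so what your construction actually yields is exactness of a sequence of pointed sets, not the asserted exact sequence of abelian groups with $i$ and $\widetilde{res}$ ring homomorphisms and $d$ a group map.

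The repair is exactly the paper's definition \eqref{description of restilde}: set $\widetilde{res}(\phi)(x):=\phi(x)-x$ for $x\in\mathfrak{a}$, i.e.\ transport $res$ itself under the bijection $\Psi$ on the source and the identity on $End_\mathfrak{g}(\mathfrak{a})$, and leave $d$ as the unmodified spectral-sequence map. Then $\widetilde{res}$ is a ring homomorphism for the $\boxplus,\boxtimes$ structure (Lemma \ref{hom of rings}), its kernel is $\{\phi:\phi|_\mathfrak{a}=id\}=End^{\mathfrak{a},\mathfrak{g}}(\mathfrak{e})$, its image equals $\mathrm{im}(res)$ with no shift, and exactness transfers verbatim from \eqref{lemma cocycle}. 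The price — which the paper pays explicitly, and which is the content of its later passage from \eqref{Theorem1} to Corollary \ref{corollary} via quasiregular elements — is that $\widetilde{res}$ is no longer the naive restriction; your instinct to keep the naive restriction is what forces the non-homomorphic $d$ and undoes the ring-theoretic part of the theorem.
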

The usefulness of imposing the extra multiplicative structure will very soon be clear to us. But before we proceed to that deeper fact, first we note that though the ring structure of $End_\mathfrak{g}(\mathfrak{a})$ is the obvious one induced from the usual ring structures of $End(\mathfrak{a})$,  that of $End^{\mathfrak{g}}_\mathfrak{a}(\mathfrak{e})$(and hence of $End^{\mathfrak{a}, \mathfrak{g}}(\mathfrak{e})$) is going to be very different from the usual one. We will introduce this structure now. We start with the following lemma:
\begin{lemma}
The following is a set bijection:
\begin{equation}
\label{bijection}
   Z^1(\mathfrak{e}, \mathfrak{a})_0 \simeq  End^{\mathfrak{g}}_\mathfrak{a}(\mathfrak{e}).
\end{equation}

\end{lemma}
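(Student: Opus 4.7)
The plan is to exhibit an explicit inverse pair of maps. Given $f \in Z^1(\mathfrak{e}, \mathfrak{a})_0$, I would define $\phi_f : \mathfrak{e} \to \mathfrak{e}$ by $\phi_f(x) := x + f(x)$, tacitly identifying $\mathfrak{a}$ with its image $i(\mathfrak{a}) \subseteq \mathfrak{e}$. In the other direction, given $\phi \in End^{\mathfrak{g}}_{\mathfrak{a}}(\mathfrak{e})$, the hypothesis $\widetilde{\phi} = id$ amounts to $p \circ \phi = p$, so $\phi(x) - x \in \ker p = i(\mathfrak{a})$ for every $x \in \mathfrak{e}$; this lets me set $f_\phi(x) := \phi(x) - x$ and view $f_\phi$ as an even linear map $\mathfrak{e} \to \mathfrak{a}$.

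The first thing to verify is that $\phi_f$ genuinely belongs to $End^{\mathfrak{g}}_{\mathfrak{a}}(\mathfrak{e})$. Evenness is automatic; $\phi_f(\mathfrak{a}) \subseteq \mathfrak{a}$ follows from $f(\mathfrak{a}) \subseteq \mathfrak{a}$; and $\widetilde{\phi_f} = id$ follows from $p \circ f = 0$. The real content is bracket preservation: expanding
\[ [\phi_f(x), \phi_f(y)] = [x,y] + [x, f(y)] + [f(x), y] + [f(x), f(y)], \]
the last term vanishes because $\mathfrak{a}$ is abelian, and the middle two, interpreted via the adjoint $\mathfrak{e}$-action on $\mathfrak{a}$ together with the evenness of $f$, rewrite as $x \cdot f(y) - (-1)^{|x||y|} y \cdot f(x)$, which by \eqref{derivation} equals $f([x,y])$. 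Hence $[\phi_f(x),\phi_f(y)] = [x,y] + f([x,y]) = \phi_f([x,y])$. That $f_\phi$ is a $1$-cocycle is the same calculation read in reverse: starting from $\phi([x,y]) = [\phi(x), \phi(y)]$ and substituting $\phi = id + f_\phi$, abelianness of $\mathfrak{a}$ again kills the $[f_\phi(x), f_\phi(y)]$ term, and cancelling $[x,y]$ on both sides produces exactly \eqref{derivation} for $f_\phi$. The two assignments are then visibly mutually inverse, since $\phi_{f_\phi}(x) = x + (\phi(x) - x) = \phi(x)$ and $f_{\phi_f}(x) = (x + f(x)) - x = f(x)$.

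The only point that really requires attention, rather than an outright obstacle, is the Koszul sign bookkeeping when translating the supercommutator $[f(x), y]$ into the module-action expression $-(-1)^{|x||y|} y \cdot f(x)$; this relies on $|f(x)| = |x|$, which is precisely the evenness of $f$ built into the definition of $Z^1(\mathfrak{e}, \mathfrak{a})_0$. Once that is noted, the lemma amounts to the standard dictionary between the $1$-cocycle identity on $\mathfrak{e}$ with values in the abelian ideal $\mathfrak{a}$ and the Lie superalgebra homomorphism identity for the perturbation $id + f$ of the identity.
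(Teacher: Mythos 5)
Your proof is correct and uses exactly the map the paper uses ($\Psi(h)(x) = h(x) + x$ and its inverse $\phi \mapsto \phi - id$); the paper merely asserts that this is "easily" seen to be a bijection, while you supply the verifications (bracket preservation via abelianness of $\mathfrak{a}$ and the evenness of $f$, and the fact that $\widetilde{\phi}=id$ forces $\phi - id$ to land in $\ker p = \mathfrak{a}$). No discrepancy to report.
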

\begin{proof}
Considering the map $\Psi: Z^1(\mathfrak{e}, \mathfrak{a})_0 \to End^{\mathfrak{g}}_\mathfrak{a}(\mathfrak{e}) $ defined by $\Psi(h)(x):=h(x) + x$ one can easily establish the required bijection. 
\end{proof}
Now with the obvious additive abelian group structure, $Z^1(\mathfrak{e}, \mathfrak{a})_0$ becomes a ring where the product of two elements is given by the composition of maps, here we identify $\mathfrak{a}$ with $i(\mathfrak{a})$. The ring axioms are very easy to check but what needs to be proved is that the composition is again a derivation. This is proved in the following lemma:
\begin{lemma}
For $f,g \in  Z^1(\mathfrak{e}, \mathfrak{a})_0 $, $f\circ g \in Z^1(\mathfrak{e}, \mathfrak{a})_0 $.
\end{lemma}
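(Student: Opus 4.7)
The plan is to verify two things: that $f \circ g$ makes sense as a map $\mathfrak{e} \to \mathfrak{a}$ of degree $0$, and that it satisfies the derivation identity \eqref{derivation}. The first point is immediate: $g$ sends $\mathfrak{e}$ into $\mathfrak{a}$, which we identify with $i(\mathfrak{a}) \subseteq \mathfrak{e}$, so $f$ can be applied to $g(x)$ and the output lies in $\mathfrak{a}$; the composition of two even maps is even. So the real content is the derivation property.

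To verify this, I would start with homogeneous $x, y \in \mathfrak{e}$ and expand $(f \circ g)([x,y])$ by first applying the derivation identity for $g$, then $f$-linearity:
\begin{equation*}
(f \circ g)([x,y]) = f\bigl(x \cdot g(y)\bigr) - (-1)^{|x||y|} f\bigl(y \cdot g(x)\bigr).
\end{equation*}
Since the action of $\mathfrak{e}$ on $\mathfrak{a}$ is the adjoint action, $x \cdot g(y) = [x, g(y)]$ is a bracket in $\mathfrak{e}$, and one can apply the derivation identity for $f$ to it. Using that $g$ is even so $|g(y)| = |y|$, one gets
\begin{equation*}
f([x, g(y)]) = x \cdot f(g(y)) - (-1)^{|x||y|} g(y) \cdot f(x).
\end{equation*}

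The crucial observation is that the correction term $g(y) \cdot f(x) = [g(y), f(x)]$ vanishes: both $g(y)$ and $f(x)$ lie in $\mathfrak{a}$, which is abelian. The same reasoning applied to the second piece gives $f([y, g(x)]) = y \cdot (f \circ g)(x)$. Substituting back yields exactly
\begin{equation*}
(f \circ g)([x,y]) = x \cdot (f \circ g)(y) - (-1)^{|x||y|} y \cdot (f \circ g)(x),
\end{equation*}
which is the derivation identity for $f \circ g$.

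There is no real obstacle here; the only thing to watch carefully is the sign bookkeeping when expanding, but because $f$ and $g$ are both even, no extra sign is introduced when $f$ is pulled past $g$-values, and the crucial cancellation of the $\mathfrak{a}$-action on $\mathfrak{a}$ is exactly the same abelian-ideal observation that drove the proof of Lemma \ref{lemma reduced5term}.
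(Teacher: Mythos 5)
Your proposal is correct and follows essentially the same route as the paper's proof: expand $(f\circ g)([x,y])$ using the cocycle identity for $g$ and then for $f$, and kill the cross terms $[g(y),f(x)]$ and $[g(x),f(y)]$ using that $\mathfrak{a}$ is abelian. The sign bookkeeping via $|g(y)|=|y|$ for even $g$ is also exactly as in the paper.
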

\begin{proof} Clearly $f\circ g$ is an even map.
Now for homogeneous $x,y \in \mathfrak{e}$, 
\begin{align*}
 & f\circ g([x,y]) \\
 &=f([x,g(y)] - (-1)^{|x||y|} [y, g(x)]  )   \\
 &= [x, f\circ g(y)]- (-1)^{|x||y|}[g(y), f(x)]- (-1)^{|x||y|}([y, f \circ g(x)]- (-1)^{|x||y|}[g(x), f(y)])  \\
 &=[x, f\circ g(y)]- (-1)^{|x||y|}[y, f \circ g(x)] \text{ as $\mathfrak{a}$ is abelian}
\end{align*}
This proves the lemma.
\end{proof}

Next we push this ring structure of $Z^1(\mathfrak{e}, \mathfrak{a})_0$ via the above bijection to get a ring structure on $End^{\mathfrak{g}}_\mathfrak{a}(\mathfrak{e})$, the explicit descriptions of the operations are given below: 

Let $f,g \in End^{\mathfrak{g}}_\mathfrak{a}(\mathfrak{e})$. Then applying the bijection \eqref{bijection} the addition, denoted by $\boxplus$, of these two elements must be given by
\begin{equation}
\label{boxplus}
    {f \boxplus g}(x):=\Psi(\Psi^{-1}(f)+\Psi^{-1}(g))(x)
    = \Psi^{-1}(f)(x)+ \Psi^{-1}(g)(x)+x
    = f(x)-x+g(x).
\end{equation}
Notice that the identity map is the zero element in  this ring.

Also the product, denoted by $\boxtimes$, must be given by
\begin{align}
\label{boxtimes}
     {f \boxtimes g}(x):&=\Psi(\Psi^{-1}(f) \circ \Psi^{-1}(g))(x) \notag \\
     &= (f-id)\circ (g-id)(x)+x \notag \\
     &= f\circ g (x) -f(x)-g(x) +2x.
    \end{align}
Now as we replace $Z^1(\mathfrak{e}, \mathfrak{a})_0$ in \eqref{lemma cocycle} by $End^{\mathfrak{g}}_\mathfrak{a}(\mathfrak{e})$(with the above ring structure), the map $res$ gets changed accordingly. This changed map we denote by $\widetilde{res}$, clearly 
\begin{equation}
\label{description of restilde}
    \widetilde{res}(f)(x)= f(x)-x\,\,\,x\in \mathfrak{a}.
\end{equation}
So it is not just the restriction of $f$ to $\mathfrak{a}$. The next lemma shows that $\widetilde{res}$ is a ring homomorphism.
\begin{lemma}
\label{hom of rings}
The map $\widetilde{res}: End^{\mathfrak{g}}_\mathfrak{a}(\mathfrak{e}) \to End_\mathfrak{g}(\mathfrak{a}) $ is a homomorphism of rings.
\end{lemma}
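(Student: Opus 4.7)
The plan is to avoid grinding through the definitions of $\boxplus$ and $\boxtimes$ directly and instead exploit the fact that the ring structure on $End^{\mathfrak{g}}_\mathfrak{a}(\mathfrak{e})$ was transported from $Z^1(\mathfrak{e},\mathfrak{a})_0$ via the bijection $\Psi$. The key observation is that $\widetilde{res}$ factors as $res \circ \Psi^{-1}$, where $res$ is the restriction map appearing in Lemma \ref{lemma reduced5term}. Indeed, writing $f = \Psi(h)$ with $h \in Z^1(\mathfrak{e},\mathfrak{a})_0$, for $a \in \mathfrak{a}$ we have $\widetilde{res}(f)(a) = f(a) - a = h(a) = res(h)(a)$, so $\widetilde{res}(f) = h|_{\mathfrak{a}}$ and in particular $\widetilde{res}$ actually takes values in $End_\mathfrak{g}(\mathfrak{a})$ by Lemma \ref{lemma reduced5term}. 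Additivity of $\widetilde{res}$ is then automatic, since $\Psi$ is by construction an additive isomorphism from $Z^1(\mathfrak{e},\mathfrak{a})_0$ to $(End^{\mathfrak{g}}_\mathfrak{a}(\mathfrak{e}), \boxplus)$ and $res$ is additive.

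The heart of the matter is multiplicativity. Using the same factorization, this reduces to the assertion that ordinary composition on $Z^1(\mathfrak{e},\mathfrak{a})_0$ (with the identification $\mathfrak{a} \hookrightarrow \mathfrak{e}$) maps under $res$ to composition on $End_\mathfrak{g}(\mathfrak{a})$. This is a one-line verification: for $h_1, h_2 \in Z^1(\mathfrak{e},\mathfrak{a})_0$ and $a \in \mathfrak{a}$, the element $h_2(a)$ already lies in $\mathfrak{a}$, so
\[
(h_1 \circ h_2)(a) = h_1(h_2(a)) = h_1|_{\mathfrak{a}}(h_2|_{\mathfrak{a}}(a)),
\]
giving $res(h_1 \circ h_2) = res(h_1) \circ res(h_2)$. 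Since $\Psi$ was defined precisely so that composition in $Z^1(\mathfrak{e},\mathfrak{a})_0$ becomes $\boxtimes$ on $End^{\mathfrak{g}}_\mathfrak{a}(\mathfrak{e})$, transporting along $\Psi$ immediately yields $\widetilde{res}(f \boxtimes g) = \widetilde{res}(f) \circ \widetilde{res}(g)$.

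As a sanity check I would also verify this directly from the explicit formulas \eqref{boxtimes} and \eqref{description of restilde}: for $a \in \mathfrak{a}$, on one hand $\widetilde{res}(f \boxtimes g)(a) = f(g(a)) - f(a) - g(a) + a$, while on the other, using that $g(a), a \in \mathfrak{a}$ and that $\widetilde{res}(f)$ is additive, $\widetilde{res}(f) \circ \widetilde{res}(g)(a) = \widetilde{res}(f)(g(a) - a) = (f(g(a)) - g(a)) - (f(a) - a)$, and the two expressions coincide. I do not anticipate a genuine obstacle: the only non-trivial input, namely that the composition of two derivations into an abelian ideal is again a derivation, was already dispatched in the preceding lemma, and everything remaining is formal transport along the bijection $\Psi$.
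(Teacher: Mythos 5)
Your proposal is correct and follows essentially the same route as the paper: both transport the problem along $\Psi$ and reduce everything to the observation that the ordinary restriction $res:Z^1(\mathfrak{e},\mathfrak{a})_0 \to End_\mathfrak{g}(\mathfrak{a})$ satisfies $res(h_1\circ h_2)=res(h_1)\circ res(h_2)$. Your added direct verification via the explicit formulas \eqref{boxtimes} and \eqref{description of restilde} is a harmless extra check not present in the paper.
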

\begin{proof}
Using the bijection \eqref{bijection}, the lemma follows from the fact that\\ $res:Z^1(\mathfrak{e}, \mathfrak{a})_0 \to End_\mathfrak{g}(\mathfrak{a})$ is a ring homomorphism as $res(f\circ g)=f \circ g |_\mathfrak{a}=f|_\mathfrak{a}\, \circ\, g|_\mathfrak{a}=res(f)\circ \, res(g)$.
\end{proof}
The next lemma shows that \eqref{Theorem1} is exact at the $2\ts{nd}$ term.
\begin{lemma}
\label{ker of tilderes}
$ker (\widetilde{res})= End^{ \mathfrak{a},\mathfrak{g}}(\mathfrak{e}). $
\end{lemma}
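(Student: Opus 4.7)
The plan is to unpack definitions and use the explicit formula \eqref{description of restilde} for $\widetilde{res}$. The whole statement should reduce to a one‑line observation once the formula is in hand, so I would keep the write‑up short.

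First I would fix $f \in End^{\mathfrak{g}}_\mathfrak{a}(\mathfrak{e})$ and note that by definition $f$ already satisfies $\tilde{f}=id$. The condition $f\in \ker(\widetilde{res})$ means $\widetilde{res}(f)$ is the zero element of the ring $End_\mathfrak{g}(\mathfrak{a})$, which is the genuine zero map (not the identity, since $End_\mathfrak{g}(\mathfrak{a})$ carries its usual additive structure). Using \eqref{description of restilde}, this reads
\begin{equation*}
f(x)-x = 0 \quad \text{for all } x \in \mathfrak{a},
\end{equation*}
that is, $f|_\mathfrak{a} = id$.

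Combining the two conditions $\tilde{f}=id$ and $f|_\mathfrak{a}=id$ is exactly the defining property of $End^{\mathfrak{a},\mathfrak{g}}(\mathfrak{e})$, so $\ker(\widetilde{res}) = End^{\mathfrak{a},\mathfrak{g}}(\mathfrak{e})$. Conversely, any $f \in End^{\mathfrak{a},\mathfrak{g}}(\mathfrak{e})$ lies in $End^{\mathfrak{g}}_\mathfrak{a}(\mathfrak{e})$ and clearly satisfies $\widetilde{res}(f)=0$, giving the reverse inclusion.

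No real obstacle is expected here; the only potential pitfall is a notational one, namely remembering that the zero of the ring $End^{\mathfrak{g}}_\mathfrak{a}(\mathfrak{e})$ under $\boxplus$ is the identity map (as recorded after \eqref{boxplus}), while the zero of $End_\mathfrak{g}(\mathfrak{a})$ is the genuine zero map. Because $\widetilde{res}$ was defined via transport of structure through the bijection $\Psi$ of \eqref{bijection}, both zeros are respected automatically, so the identification $\widetilde{res}(f)=0 \Leftrightarrow f|_\mathfrak{a}=id$ is consistent and the argument closes.
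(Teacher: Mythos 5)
Your proof is correct, but it takes a different route from the paper's. You argue directly from the explicit formula \eqref{description of restilde}: $\widetilde{res}(f)$ is the zero map of $End_\mathfrak{g}(\mathfrak{a})$ precisely when $f(x)-x=0$ for all $x\in\mathfrak{a}$, i.e.\ $f|_\mathfrak{a}=id$, and since every $f\in End^{\mathfrak{g}}_\mathfrak{a}(\mathfrak{e})$ already satisfies $\tilde f=id$ by definition, the kernel is exactly $End^{\mathfrak{a},\mathfrak{g}}(\mathfrak{e})$. The paper instead derives the lemma from the exactness of the cocycle-level five-term sequence \eqref{lemma cocycle} at $Z^1(\mathfrak{e},\mathfrak{a})_0$, the identification of $Z^1(\mathfrak{g},\mathfrak{a})_0$ with $Aut^{\mathfrak{a},\mathfrak{g}}(\mathfrak{e})=End^{\mathfrak{a},\mathfrak{g}}(\mathfrak{e})$ from \cite[Lemma 5.1]{HH}, and the commutativity of the square relating $inf$ to the inclusion $i$ via the bijection $\Psi$ of \eqref{bijection}. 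Your argument is more elementary and self-contained, avoiding the external citation and the diagram chase; the paper's version has the virtue of exhibiting the kernel computation as a formal consequence of the Hochschild--Serre machinery, which is the organizing theme of Section 3, but for this particular step your direct verification is complete and, if anything, more transparent. Your closing remark correctly flags the one genuine pitfall, namely that the zero of the $\boxplus$-ring $End^{\mathfrak{g}}_\mathfrak{a}(\mathfrak{e})$ is the identity map while the zero of $End_\mathfrak{g}(\mathfrak{a})$ is the zero map, and that the kernel must be taken as the preimage of the latter.
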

\begin{proof}
This follows from exactness of \eqref{lemma cocycle}, \cite[Lemma 5.1]{HH} and the commutativity of the diagram 
\begin{tikzcd}
  Z^1(\mathfrak{g}, \mathfrak{a})_0 \arrow{r}{inf} \arrow{d}{\Psi} &  Z^1(\mathfrak{e}, \mathfrak{a})_0 \arrow{d}{\Psi}  \\
  End^{ \mathfrak{a},\mathfrak{g}}(\mathfrak{e}) \arrow{r}{i} & End^{\mathfrak{g}}_\mathfrak{a}(\mathfrak{e})
\end{tikzcd}

Here we used the observation $End^{ \mathfrak{a},\mathfrak{g}}(\mathfrak{e})=Aut^{ \mathfrak{a},\mathfrak{g}}(\mathfrak{e})$ implied by 5-lemma.
\end{proof}

\subsection{Proof of Theorem \ref{1sttheorem}}
\begin{proof}
From the construction of the map $\widetilde{res}$ clearly $img(res)= img(\widetilde{res})$. Therefore the exactness of \eqref{Theorem1} at the $3\ts{rd}$ term follows from the corresponding exactness of \eqref{lemma cocycle}. Exactness at the $2\ts{nd}$ term follows from Lemma \ref{ker of tilderes}. Finally from Lemma \ref{hom of rings} we have the first two maps are homomorphism of rings, here the ring structure of $End^{\mathfrak{g}}_\mathfrak{a}(\mathfrak{e})$ is given by \eqref{boxplus} and \eqref{boxtimes}. This proves the theorem.
\end{proof}

We have the following corollary of Theorem \ref{1sttheorem}:
\begin{corollary} The following is an exact sequence of groups:
\label{corollary}
\begin{equation}
\label{cor1}
    1\rightarrow Aut^{\mathfrak{a},\mathfrak{g}}(\mathfrak{e})\xrightarrow{i} Aut^{\mathfrak{g}}_{\mathfrak{a}}(\mathfrak{e}) \xrightarrow{\widetilde{res}} Aut_\mathfrak{g}(\mathfrak{a})\xrightarrow{d} H^2(\mathfrak{g},\mathfrak{a})_0
\end{equation}
  \end{corollary}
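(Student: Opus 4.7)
The plan is to deduce \eqref{cor1} from the additive exact sequence of Theorem \ref{1sttheorem} by identifying each automorphism group with the group of quasi-regular (or circle) elements in an appropriate ring. Recall that in any ring $(R,+,\cdot)$ an element $r$ is \emph{quasi-regular} if there exists $r'$ with $r+r'+rr'=0=r'+r+r'r$; the quasi-regular elements form a group under $r\circ s := r+s+rs$ with identity $0$, and ring homomorphisms carry quasi-regular elements to quasi-regular elements while intertwining the circle operations.

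I would first realize the three automorphism groups as circle groups. For $End_\mathfrak{g}(\mathfrak{a})$ the shift $\phi\mapsto id_\mathfrak{a}+\phi$ identifies its circle group with $Aut_\mathfrak{g}(\mathfrak{a})$. For $(End^{\mathfrak{g}}_\mathfrak{a}(\mathfrak{e}),\boxplus,\boxtimes)$ a direct computation from \eqref{boxplus} and \eqref{boxtimes} shows that the circle operation is $f\circ g = f\boxplus g\boxplus(f\boxtimes g) = fg$, ordinary composition, with additive identity $id_\mathfrak{e}$; together with the 5-lemma (which also guarantees that an invertible $f\in End^{\mathfrak{g}}_\mathfrak{a}(\mathfrak{e})$ has its inverse again in this set) this identifies its circle group with $Aut^{\mathfrak{g}}_\mathfrak{a}(\mathfrak{e})$. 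The analogous statement for $End^{\mathfrak{a},\mathfrak{g}}(\mathfrak{e})$ is the already-noted equality $End^{\mathfrak{a},\mathfrak{g}}(\mathfrak{e})=Aut^{\mathfrak{a},\mathfrak{g}}(\mathfrak{e})$. Under these three identifications, $\widetilde{res}$ from Theorem \ref{1sttheorem}, combined with \eqref{description of restilde}, descends to the map $f\mapsto f|_\mathfrak{a}$ from $Aut^{\mathfrak{g}}_\mathfrak{a}(\mathfrak{e})$ to $Aut_\mathfrak{g}(\mathfrak{a})$; its kernel is $\{f:f|_\mathfrak{a}=id\}=Aut^{\mathfrak{a},\mathfrak{g}}(\mathfrak{e})$, giving exactness at the middle term. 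The boundary $d$ in \eqref{cor1} is defined by $d(\phi):=d(\phi-id_\mathfrak{a})$ with the right-hand $d$ taken from Theorem \ref{1sttheorem}.

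The real content, as opposed to bookkeeping through the circle correspondence, lies in exactness at $Aut_\mathfrak{g}(\mathfrak{a})$. Given $\phi\in Aut_\mathfrak{g}(\mathfrak{a})$ with $d(\phi-id_\mathfrak{a})=0$ in $H^2(\mathfrak{g},\mathfrak{a})_0$, Theorem \ref{1sttheorem} produces some $f\in End^{\mathfrak{g}}_\mathfrak{a}(\mathfrak{e})$ with $f|_\mathfrak{a}=\phi$; but a priori this $f$ need not be bijective. The 5-lemma applied to the defining commutative diagram of $\tilde f$ then forces $f$ to be an automorphism of $\mathfrak{e}$, since $f|_\mathfrak{a}=\phi$ and $\tilde f=id_\mathfrak{g}$ are both isomorphisms, placing $f$ in $Aut^{\mathfrak{g}}_\mathfrak{a}(\mathfrak{e})$. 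This hand-off from $End$ to $Aut$ via the 5-lemma is the main obstacle of the proof; everything else follows by transporting Theorem \ref{1sttheorem} along the circle-group identifications.
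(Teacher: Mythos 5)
Your proposal is correct and follows essentially the same route as the paper: identify $Aut^{\mathfrak{g}}_{\mathfrak{a}}(\mathfrak{e})$ and $Aut_\mathfrak{g}(\mathfrak{a})$ with the quasiregular (circle) groups of the rings in Theorem \ref{1sttheorem} (noting $f\ast g=f\circ g$ and $\phi\mapsto id+\phi$), transport $\widetilde{res}$ to the usual restriction, and use the 5-lemma to promote the preimage in $End^{\mathfrak{g}}_{\mathfrak{a}}(\mathfrak{e})$ to an automorphism for exactness at the third term. This matches the paper's own proof of the corollary.
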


We claim that this is exactly the first exact sequence given in  \cite[Theorem 4.6]{HH}, see \eqref{seq1}. In \cite{HH} it was proved by considering certain maps between the objects and checking exactness at each term, this is the similar way how it was done for the group case in \cite{Passi}. The most important point to notice here is that we obtain the same exact sequence \eqref{seq1} as a consequence of the Hochschild-Serre spectral sequence though the proof of this fact is not so obvious. Evidently we could not just restrict the maps in \eqref{Theorem1} to the group of units in the rings and get \eqref{cor1} as the ring structure of $End^{\mathfrak{g}}_{\mathfrak{a}}(\mathfrak{e})$ is quite different. For example, we miss the identity map from $End^{\mathfrak{g}}_{\mathfrak{a}}(\mathfrak{e})$ while considering the units, the identity map being the ``zero" element of the current ring structure. To overcome this we make use of the notion of quasiregular elements in rings.

Let $R$ be a ring. Define an operation  $\ast: R \times R \to R$ by $x \ast y =x +y+xy$ for $x,y \in R$. An element $x \in R$ is said to be quasiregular if there exists an element $y \in R$ such that $x \ast y= y \ast x=0$. Denote the set of all such elements by $QR(R)$, then $(QR(R),\ast)$ is a group. If $R$ is a ring with unity then we have an isomorphism $(QR(R), \ast)\simeq U(R)$ of groups given by the map $r\mapsto 1+r$ where $U(R)$ denotes the group of units of $R$.
See \cite{Kaplansky} for details on quasiregular elements.

The following lemma describes the quasiregular elements of $End^\mathfrak{g}_\mathfrak{a}(\mathfrak{e})$:
\begin{lemma}
\label{qr}
 $\left( QR(End^\mathfrak{g}_\mathfrak{a}(\mathfrak{e})), \ast \right) \simeq Aut^\mathfrak{g}_\mathfrak{a}(\mathfrak{e})$ as groups.
\end{lemma}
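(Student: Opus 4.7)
The plan is to show that the operation $\ast$ on the ring $\bigl(End^{\mathfrak{g}}_\mathfrak{a}(\mathfrak{e}), \boxplus, \boxtimes\bigr)$ collapses to ordinary composition $\circ$ of maps. Once this is in hand, the observation (made just after \eqref{boxplus}) that the additive zero is $\mathrm{id}_\mathfrak{e}$ translates quasiregularity of $f$ into the existence of a two-sided composition inverse of $f$ living inside $End^{\mathfrak{g}}_\mathfrak{a}(\mathfrak{e})$, which is precisely the defining condition for membership in $Aut^{\mathfrak{g}}_\mathfrak{a}(\mathfrak{e})$. The required group isomorphism is then supplied by the identity assignment $f \mapsto f$.

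The computational core is a direct substitution. Plugging
\begin{align*}
(f \boxplus g)(x) &= f(x) + g(x) - x, \\
(f \boxtimes g)(x) &= f \circ g(x) - f(x) - g(x) + 2x
\end{align*}
into $f \ast g = f \boxplus g \boxplus (f \boxtimes g)$, the two applications of $\boxplus$ contribute a total of $-2x$ which is cancelled by the $+2x$ coming from $\boxtimes$; the linear terms $\pm f(x), \pm g(x)$ cancel pairwise; and what survives is $(f \ast g)(x) = f \circ g(x)$. So $\ast$ coincides with $\circ$ on $End^{\mathfrak{g}}_\mathfrak{a}(\mathfrak{e})$.

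Consequently $f \in QR(End^{\mathfrak{g}}_\mathfrak{a}(\mathfrak{e}))$ iff there exists $g \in End^{\mathfrak{g}}_\mathfrak{a}(\mathfrak{e})$ with $f \circ g = g \circ f = \mathrm{id}_\mathfrak{e}$. A quick check using $\tilde{f} = \mathrm{id}$ (equivalently $p \circ f = p$) shows that bijectivity of $f$ inside $End^{\mathfrak{g}}_\mathfrak{a}(\mathfrak{e})$ already forces $f^{-1} \in End^{\mathfrak{g}}_\mathfrak{a}(\mathfrak{e})$, so the quasiregular elements are precisely the members of $Aut^{\mathfrak{g}}_\mathfrak{a}(\mathfrak{e})$. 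The identity assignment $f \mapsto f$ is therefore a set bijection carrying $\ast$ to $\circ$, hence a group isomorphism. The only substantive step is the collapse $f \ast g = f \circ g$; beyond that everything is definition-unwinding, so there is no real obstacle in the proof.
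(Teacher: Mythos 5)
Your proof is correct and follows essentially the same route as the paper: the key step in both is the computation that $f \ast g = f \boxplus g \boxplus (f \boxtimes g)$ collapses to $f \circ g$, after which quasiregularity (with the identity map as the ring's zero) is read off as invertibility under composition within $End^{\mathfrak{g}}_\mathfrak{a}(\mathfrak{e})$. Your added check that $\tilde f = \mathrm{id}$ forces $f^{-1} \in End^{\mathfrak{g}}_\mathfrak{a}(\mathfrak{e})$ is a detail the paper leaves implicit, and it is right.
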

 \begin{proof}
 To prove this we show that the $\ast$ operation in $End^\mathfrak{g}_\mathfrak{a}(\mathfrak{e})$ turns out to be the composition of maps. 
 
 Let $f,g \in End^\mathfrak{g}_\mathfrak{a}(\mathfrak{e})$. Then 
 \begin{align*}
     f\ast g(x) &=f \boxplus g  \boxplus (f\boxtimes g)(x)\\
     &=(f(x)-x+g(x)) \boxplus (f\circ g(x) -f(x) -g(x)+2x) \text{ [ using \eqref{boxplus} and \eqref{boxtimes}}]\\
     &= f\circ g(x) \text{ [ using \eqref{boxplus}]}
     \end{align*}
 \end{proof}
 
 \subsection{Proof of Corollary \ref{corollary}}
 Here we give a proof of the above corollary.
 \begin{proof}
 First we restrict the map $\widetilde{res}$ of \eqref{Theorem1} to the set of quasiregular  elements. By doing so and using Lemma \ref{qr} we get a group homomorphism 
 \begin{equation}
 \widetilde{res}: Aut^\mathfrak{g}_\mathfrak{a}(\mathfrak{e}) \to (QR(End_\mathfrak{g}(\mathfrak{a})), \ast). 
 \end{equation}
 Now as $End_\mathfrak{g}(\mathfrak{a})$ is a ring with unity, the quasiregular elements in that ring are just the invertible elements and we have the isomorphism 
 \begin{equation}
 \label{isom qr and autg(a)}
     (QR(End_\mathfrak{g}(\mathfrak{a})), \ast) \simeq Aut_\mathfrak{g}(\mathfrak{a}) 
 \end{equation}
 given by $\phi \mapsto id+\phi $. 
 Thus the map 
 \begin{equation}
 \label{ker in cor1}
     \widetilde{res}: Aut^\mathfrak{g}_\mathfrak{a}(\mathfrak{e}) \to Aut_\mathfrak{g}(\mathfrak{a})
 \end{equation}
 becomes just the restriction map given by $\widetilde{res}(\phi)=\phi|_\mathfrak{a}$. Also by Lemma \ref{ker of tilderes} the kernel of the map $\widetilde{res}$ given in \eqref{ker in cor1} is $Aut^{\mathfrak{a},\mathfrak{g}}(\mathfrak{e})$. So \eqref{cor1} is exact at the 2\ts{nd} term. For the exactness at the 3\ts{rd} term let $\psi \in ker\, d$ in \eqref{cor1}. Then by the exactness of \eqref{Theorem1} $\psi= \widetilde{res}(\phi)$ for some $\phi \in End^\mathfrak{g}_\mathfrak{a}(\mathfrak{e})$. But since also $\psi \in Aut_\mathfrak{g}(\mathfrak{a})$, from the description of $\widetilde{res}$ in \eqref{description of restilde} we have $\phi|_\mathfrak{a}$ is an automorphism. Therefore by 5-lemma it follows that in particular $\phi \in Aut^\mathfrak{g}_\mathfrak{a}(\mathfrak{e})$. Clearly then $\psi= \widetilde{res}(\phi)$ for some $\phi \in Aut^\mathfrak{g}_\mathfrak{a}(\mathfrak{e})$. Consequently $\eqref{cor1} $ is exact at the 3\ts{rd} term. This completes the proof of the corollary.
 \end{proof}
 Before we proceed further, it follows from \cite{HH} that the object $\mathcal{C}_1$ in \eqref{seq1} is exactly $Aut_\mathfrak{g}(\mathfrak{a})$. Now to validate our claim that the exact sequence \eqref{cor1} is exactly same as \eqref{seq1}, we need to show that the involved maps are same. It is clear that the first map is just inclusion. The map $\tau_1$ in \eqref{seq1} is actually the usual restriction map follows from the construction given in the paragraph just before \cite[Lemma 4.5]{HH}, which is the same map as $\widetilde{res}$ given in \eqref{ker in cor1}. So what we are only left to show is that the map $d$ in \eqref{cor1} is same as $\lambda_1$.
 
 For this purpose  we need to have an exact description of the map $d$, the description of $\lambda_1$ can be found in the discussion just before \cite[Lemma 4.3]{HH}. We remember that $d$ is induced by the Hochschild-Serre spectral sequence and in the case of a general module $M$ over $\mathfrak{e}$ giving an explicit description of $d$ seems to be a difficult problem. But in our case the module is $\mathfrak{a}$ and in particular the action on $\mathfrak{a}$ as a module over itself is trivial, $\mathfrak{a}$ being abelian. In this case a cocycle description of the map $d$ can be obtained from the following theorem:

\begin{theorem}\cite[Theorem 16.6.7]{Musson}
\label{Theorem Musson}

Let $0\rightarrow \mathfrak{n} \xrightarrow{i} \mathfrak{k} \xrightarrow{p} \mathfrak{q} \to 0$ be an extension of Lie superalgebra and $M$ a $\mathfrak{k}$-module. Assume that $\mathfrak{n}$ is abelian and $\mathfrak{n}M=0$. Then the differential $d_2:E^{p,1}_2 \to E^{p+2,0}_2$ is given by $$\theta_{p+2,0}(d_2(u))=-c \cup \theta_{p,1}(u)$$
where $\cup$ is the cup product and $c \in H^2(\mathfrak{q},\mathfrak{n})$ is the cohomology class of the 2-cocycle associated to the given extension.
\end{theorem}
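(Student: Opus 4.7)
The plan is to realize the Hochschild--Serre spectral sequence concretely via a filtration of the Chevalley--Eilenberg cochain complex $C^{\bullet}(\mathfrak{k},M)$ and then compute $d_2$ by the standard ``lift--differential--project'' procedure. Fix an even section $s:\mathfrak{q}\to\mathfrak{k}$ of $p$; this gives $\mathfrak{k}\cong s(\mathfrak{q})\oplus\mathfrak{n}$ as super vector spaces, and the associated $2$-cocycle $c(x,y):=[s(x),s(y)]-s([x,y])\in\mathfrak{n}$ has class $c\in H^2(\mathfrak{q},\mathfrak{n})$. First I would set up the filtration $F^pC^n(\mathfrak{k},M)$ consisting of cochains that vanish whenever more than $n-p$ of their arguments lie in $\mathfrak{n}$, and use $s$ to obtain the canonical identification $E_1^{p,q}\cong C^p(\mathfrak{q},C^q(\mathfrak{n},M))$. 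Since $\mathfrak{n}M=0$ and $\mathfrak{n}$ is abelian, the inner Chevalley differential on $C^{\bullet}(\mathfrak{n},M)$ is zero, so $H^q(\mathfrak{n},M)\cong\mathrm{Hom}(\Lambda^q\mathfrak{n},M)$ in the super-exterior sense; passing to $q$-cohomology of $E_1$ yields the stated $E_2$-page and fixes the isomorphisms $\theta_{p,q}$.

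Next I would compute $d_2$ on a representative. Let $u\in E_2^{p,1}$ be represented by $\tilde u\in C^p(\mathfrak{q},\mathrm{Hom}(\mathfrak{n},M))$. Lift $\tilde u$ to $\hat u\in F^pC^{p+1}(\mathfrak{k},M)$ by setting $\hat u(s(x_1),\dots,s(x_p),n):=\tilde u(x_1,\dots,x_p)(n)$ and $\hat u=0$ on arguments containing two or more entries from $\mathfrak{n}$, then apply the Chevalley--Eilenberg differential $d_{\mathfrak{k}}$. The image of $d_{\mathfrak{k}}\hat u$ in $F^{p}/F^{p+1}$ vanishes because the inner differential is trivial, and its image in $F^{p+1}/F^{p+2}$ vanishes because $\tilde u$ is a $\mathfrak{q}$-cocycle; hence $d_{\mathfrak{k}}\hat u\in F^{p+2}$ represents $d_2(u)$ in $E_2^{p+2,0}\cong H^{p+2}(\mathfrak{q},M)$. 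Evaluating $d_{\mathfrak{k}}\hat u$ on tuples $s(x_1),\dots,s(x_{p+2})$, each bracket $[s(x_i),s(x_j)]$ splits into an $s(\mathfrak{q})$-part $s([x_i,x_j])$ (whose contributions assemble into the $\mathfrak{q}$-differential of $\tilde u$ and so cancel) and an $\mathfrak{n}$-part $c(x_i,x_j)$, which pairs against $\tilde u$ via the evaluation $\mathfrak{n}\otimes\mathrm{Hom}(\mathfrak{n},M)\to M$.

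Third, I would identify the surviving pairing with $\pm c\cup\theta_{p,1}(u)$, using the super-Koszul definition of the cup product on $C^{\bullet}(\mathfrak{q},-)$ with respect to the evaluation pairing $\mathfrak{n}\otimes\mathrm{Hom}(\mathfrak{n},M)\to M$. The combinatorial shuffle factors match term by term, and the overall sign is determined by transporting the odd $\mathfrak{n}$-argument of $\hat u$ past its $p$ $\mathfrak{q}$-arguments when $d_{\mathfrak{k}}$ is applied; once all super-sign conventions are aligned this produces precisely the $-1$ of $\theta_{p+2,0}(d_2(u))=-c\cup\theta_{p,1}(u)$.

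The main obstacle is sign bookkeeping: the super-Koszul sign in the cup product, the $(-1)^{|x||y|}$ factors in the Chevalley--Eilenberg differential, and the sign from permuting odd arguments in $\hat u$ must be reconciled to extract the explicit $-1$ appearing in the formula. A secondary point is checking well-definedness at each stage (independence of $s$, of the lift $\hat u$, and of the representative $\tilde u$); this is routine from the spectral sequence machinery but should be verified against the explicit cochain formula before concluding that $d_2$ descends unambiguously to $E_2$.
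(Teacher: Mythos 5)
The paper does not actually prove this statement: it is quoted verbatim from Musson's book (Theorem 16.6.7), and the paper's only original work connected to it is downstream — the cup-product lemma and Lemma \ref{description of d}, which specialize the formula to $(p,q)=(0,1)$. So there is no in-paper proof to compare against; what you have written is a reconstruction of the standard argument behind the cited result.

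As a reconstruction it is structurally sound. The filtration by the number of $\mathfrak{n}$-arguments, the identification of the associated graded with $C^p(\mathfrak{q},C^q(\mathfrak{n},M))$ via the section, and the observation that the hypotheses ($\mathfrak{n}$ abelian, $\mathfrak{n}M=0$) kill the inner differential — so that $E_1=E_0$ and your lift $\hat u$ automatically satisfies $d_{\mathfrak{k}}\hat u\in F^{p+2}$ without any correction term — are exactly the simplifications that make $d_2$ computable on the nose. Two caveats. First, a small internal inconsistency: if $\hat u$ is taken to vanish on tuples with no $\mathfrak{n}$-entry (which your definition implicitly forces), then the $s([x_i,x_j])$ contributions and the action terms $s(x_i)\cdot\hat u(\cdots)$ vanish individually; they do not ``assemble into the $\mathfrak{q}$-differential of $\tilde u$'', which is a cochain valued in $\mathrm{Hom}(\mathfrak{n},M)$ and cannot appear when all $p+2$ arguments are lifted from $\mathfrak{q}$. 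Second, and more substantively, the entire content of the theorem is the normalization — the minus sign and the super-Koszul factor in the cup product — and your proposal defers exactly this (``once all super-sign conventions are aligned''). The paper has to do genuine work at precisely this point: its cup-product lemma extracts $(h\cup f)(x,y)=(-1)^{|f|(|x|+|y|)}(-1)^{|f||h(x,y)|}f(h(x,y))$ from Musson's Example 16.5.9 before Lemma \ref{description of d} can conclude $d([h])=-[h\circ\beta]$. A complete proof therefore must fix explicit conventions for the Chevalley--Eilenberg differential and for $\cup$ and track the signs through the evaluation of $d_{\mathfrak{k}}\hat u$; as written, the $-1$ in the statement is asserted rather than derived.
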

Here $\theta_{p,q}:E^{p,q}_2 \to H^p(\mathfrak{q}, H^q(\mathfrak{n}, M))$ is an isomorphism. For details see \cite[Chapter 16]{Musson}.

In our case the extension is \eqref{extension} and the map $d$ in \eqref{5term} is the map $d:E^{0,1}_2 \to E^{2,0}_2$ of the Hochschild-Serre spectral sequence. So we will apply the above theorem for $(p,q)\equiv(0,1)$ and obtain a cocycle description of $d$. In the following lemma we give a description of the cup product in a particular case. The objects $C^n(-,\, -)$ denotes the the group of $n$-cochains, see \cite{HH} or see \cite{Musson} for more details.
\begin{lemma}
The cup product on cochains 
\begin{equation}
    \cup: C^2(\g, \a) \times C^0(\g,  C^1(\a, \a)) \to C^2(\g, \a)
\end{equation}
can be given by 
\begin{equation}
    (h\cup f)(x,y)=(-1)^{|f|(|x|+|y|)}(-1)^{|f||h(x,y)|}f(h(x,y))
\end{equation}
for $x,y\in \g$, $h\in C^2(\g, \a)$ and $f\in  C^1(\a, \a)$.
\end{lemma}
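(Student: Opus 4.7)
My plan is to deduce the formula directly from the general definition of the cup product on Lie superalgebra cochains, once the correct coefficient pairing is pinned down.

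First I would recall the general construction: given any $\g$-equivariant pairing $\mu\colon M \otimes N \to P$ of $\g$-modules, the cup product
\[
\cup\colon C^p(\g, M) \otimes C^q(\g, N) \longrightarrow C^{p+q}(\g, P)
\]
is defined on homogeneous cochains $\alpha, \beta$ by a shuffle sum with Koszul signs,
\[
(\alpha \cup \beta)(x_1,\dots,x_{p+q}) = \sum_{\sigma \in \mathrm{Sh}(p,q)} \mathrm{sgn}(\sigma)\, \epsilon(\sigma)\, (-1)^{|\beta|(|x_{\sigma(1)}|+\cdots+|x_{\sigma(p)}|)}\, \mu\bigl(\alpha(x_{\sigma(1)},\dots,x_{\sigma(p)}) \otimes \beta(x_{\sigma(p+1)},\dots,x_{\sigma(p+q)})\bigr),
\]
where $\epsilon(\sigma)$ is the Koszul sign for permuting the graded inputs, as in \cite[Chapter 16]{Musson}.

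Next I would identify the relevant pairing in our setting: take $M = \a$, $N = C^1(\a,\a)$, $P = \a$, and let $\mu\colon \a \otimes C^1(\a,\a) \to \a$ be the Koszul-twisted evaluation
\[
\mu(a\otimes \phi) = (-1)^{|a||\phi|}\phi(a).
\]
I would check that $\mu$ is $\g$-equivariant when $C^1(\a,\a)$ carries the standard super conjugation action $(g\cdot \phi)(a) = g\cdot \phi(a) - (-1)^{|g||\phi|}\phi(g\cdot a)$; this is a direct unpacking of definitions, and it is the step I would do most carefully to lock in the sign conventions compatibly with \cite{Musson}.

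Finally, specialization to $(p,q) = (2,0)$ is immediate. Since $C^0(\g, C^1(\a,\a)) = C^1(\a,\a)$ and the only $(2,0)$-shuffle is the identity, one has $\mathrm{sgn}(\sigma) = \epsilon(\sigma) = 1$ and the shuffle sum collapses to a single term. Substituting $\alpha = h$, $\beta = f$, and using the definition of $\mu$, yields
\[
(h \cup f)(x, y) = (-1)^{|f|(|x|+|y|)}\, \mu\bigl(h(x,y) \otimes f\bigr) = (-1)^{|f|(|x|+|y|)}(-1)^{|f||h(x,y)|}\, f\bigl(h(x,y)\bigr),
\]
which is exactly the asserted formula. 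The only real obstacle is sign bookkeeping: one factor records the Koszul cost of moving $f$ past the two $\g$-arguments of $h$, while the other comes from the evaluation pairing itself. Provided the pairing $\mu$ is identified correctly, everything else is mechanical.
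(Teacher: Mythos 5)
Your proposal is correct and follows essentially the same route as the paper: identify $C^0(\g,C^1(\a,\a))$ with $C^1(\a,\a)$, use the Koszul-twisted evaluation pairing $a\otimes\phi\mapsto(-1)^{|a||\phi|}\phi(a)$ (the paper obtains this as the $k=1$ case of \cite[Example 16.5.9]{Musson}), and observe that for bidegree $(2,0)$ the shuffle sum reduces to the single identity permutation with trivial sign. The only cosmetic difference is that you propose to verify $\g$-equivariance of the pairing by hand, where the paper simply imports the pairing and the cup-product formula directly from Musson's text.
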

\begin{proof}
Let $F$ be the underlying field. We can identify $C^0(\g,  C^1(\a, \a))=Hom_F(F, C^1(\a, \a))$, the space of $F$-linear maps from $F$ to $C^1(\a, \a)$, with $C^1(\a, \a)$ via the following identification: for $ h\in Hom_F(F, C^1(\a, \a))$ we identify $h$ with $h(1)\in  C^1(\a, \a)$. Clearly, if $h$ is homogeneous we have $|h|=|h(1)|$, as it is always assumed that the $\mathbb{Z}_2$-grading of $F$ is concentrated only at $0\ts{th}$ component. Then taking $k=1$ in \cite[Example 16.5.9]{Musson} we obtain the pairing $\star : \a \otimes C^1(\a, \a) \to \a $ given by 
\begin{equation}
\label{pairing}
     a \otimes h \mapsto (-1)^{|a||h|} h(a).
\end{equation}
Now we take $n=2$, $q=2$ and $p=0$. Also let $f\in C^2(\g, \a)$ and $h\in  C^1(\a, \a)$. Then again from \cite[Example 16.5.9]{Musson} we have 
\begin{equation}
\label{special case of cup product}
\begin{split}
    (f\cup h)(x,y)&= (-1)^{|h|(|x|+|y|)}f(x,y)\star h\\
    &=(-1)^{|h|(|x|+|y|)}(-1)^{|h||f(x,y)|}h(f(x,y))\,\,[\text{using\, \eqref{pairing}}]
    \end{split}
\end{equation}
for $x,y \in \g$. Here we have used the facts that $\Xi$ of \cite[Example 16.5.9]{Musson} is given by the identity permutation $\{e\}$ in this particular case and the inversion set of identity permutation $Inv(e)= \emptyset$, the empty set.
\end{proof}

We use the above formula for cup product to give a description of the map $d$.
\begin{lemma}
\label{description of d}
The map $d:H^1(\mathfrak{a}, \mathfrak{a})_0^\mathfrak{g} \to H^2(\mathfrak{g}, \mathfrak{a})_0$ is given by $d([h])= -[h\circ \beta]$ where $\beta$ is a 2-cocycle corresponding to \eqref{extension} and $[\cdot]$ denotes the cohomology class. 
\end{lemma}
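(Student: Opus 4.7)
The plan is to obtain the formula as a direct application of Theorem \ref{Theorem Musson} combined with the cup product computation of the preceding lemma. First I would verify that the hypotheses of Theorem \ref{Theorem Musson} are satisfied in our setting: we take $\mathfrak{n} = \mathfrak{a}$, $\mathfrak{k} = \mathfrak{e}$, $\mathfrak{q} = \mathfrak{g}$, and $M = \mathfrak{a}$ with the adjoint $\mathfrak{e}$-action. Since $\mathfrak{a}$ is abelian, it acts trivially on itself, so the hypothesis $\mathfrak{n}M = 0$ holds. The 2-cocycle $\beta$ associated to the extension \eqref{extension} then gives rise to the class $c = [\beta] \in H^2(\mathfrak{g},\mathfrak{a})$.

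Next, I would apply Theorem \ref{Theorem Musson} with $(p,q) = (0,1)$, so that the differential $d_2 \colon E^{0,1}_2 \to E^{2,0}_2$ of the Hochschild--Serre spectral sequence coincides, under the isomorphisms $\theta_{0,1}$ and $\theta_{2,0}$, with the map $d \colon H^1(\mathfrak{a},\mathfrak{a})_0^{\mathfrak{g}} \to H^2(\mathfrak{g},\mathfrak{a})_0$ appearing in \eqref{5term}. Under the natural identification, $\theta_{0,1}([h]) = [h]$ for $[h] \in H^1(\mathfrak{a},\mathfrak{a})_0^{\mathfrak{g}}$, and similarly $\theta_{2,0}$ is the identity on $H^2(\mathfrak{g},\mathfrak{a})$. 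Thus the theorem yields
\begin{equation*}
    d([h]) = -\,[\beta] \cup [h].
\end{equation*}

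Finally, I would evaluate the cup product using the formula from the previous lemma. Since $[h] \in H^1(\mathfrak{a},\mathfrak{a})_0^{\mathfrak{g}}$ is represented by an even map, we have $|h| = 0$, so the sign factors $(-1)^{|h|(|x|+|y|)}(-1)^{|h||\beta(x,y)|}$ collapse to $1$, and the formula becomes
\begin{equation*}
    (\beta \cup h)(x,y) = h(\beta(x,y)) = (h \circ \beta)(x,y)
\end{equation*}
for all homogeneous $x,y \in \mathfrak{g}$. Passing to cohomology classes yields $d([h]) = -[h \circ \beta]$, which is the asserted formula.

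The main obstacle I foresee is bookkeeping rather than mathematical: one must carefully match the conventions of Musson's theorem (the placement of $c$ on the left of the cup product, the sign on the right-hand side) with the conventions adopted in \cite{HH} for $\beta$ and for the cup product pairing, and verify that the isomorphisms $\theta_{p,q}$ reduce to the identity in this low-dimensional case so that no extra signs are introduced. Once these identifications are made explicit, the proof is essentially a substitution.
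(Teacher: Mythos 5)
Your proposal is correct and follows essentially the same route as the paper: apply Theorem \ref{Theorem Musson} with $(p,q)=(0,1)$ under the identifications $E_2^{0,1}\cong H^1(\mathfrak{a},\mathfrak{a})^{\mathfrak{g}}$ and $E_2^{2,0}\cong H^2(\mathfrak{g},\mathfrak{a})$, then evaluate $-[\beta]\cup[h]$ via the cup product formula of the preceding lemma. The only cosmetic difference is in how the sign factors are killed — you use $|h|=0$, while the paper uses $|\beta(x,y)|=|x|+|y|$ so that the two signs cancel for arbitrary homogeneous $h$ — but both are valid in the degree-$0$ setting of the statement.
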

Here $\beta:\mathfrak{g}\times \mathfrak{g} \to \mathfrak{a}$ is given by $\beta(x,y):=[\eta(x), \eta(y)]-\eta[x,y]\,; x,y \in \mathfrak{g}$ for any section $\eta$ of the map $p$ in \eqref{extension}. Such a $\beta\in Z^2(\mathfrak{g},\mathfrak{a})_0$, see \cite[Lemma 4.1]{HH}.
\begin{proof} We use the identifications $H^0(\mathfrak{g},H^1(\mathfrak{a},\mathfrak{a}))=H^1(\mathfrak{a},\mathfrak{a})^\mathfrak{g}$ and $H^2(\mathfrak{g},H^0(\mathfrak{a},\mathfrak{a}))= H^2(\mathfrak{g},\mathfrak{a}^\mathfrak{a})=H^2(\mathfrak{g},\mathfrak{a})$ to obtain the following commutative diagram:
\begin{center}
\begin{tikzcd}
  E_2^{0,1}\arrow{r}{d_2} \arrow{d}{\theta_{0,1}} &  E_2^{2,0} \arrow{d}{\theta_{2,0}}  \\
  H^1(\mathfrak{a},\mathfrak{a})^\mathfrak{g} \arrow{r}{d} & H^2(\mathfrak{g},\mathfrak{a})
\end{tikzcd}.
\end{center}
\noindent Let us take $u\in E^{0,1}_2$ and let $\theta_{0,1}(u)=[h]$, the cohomology class of $h$ for some $h\in C^1(\mathfrak{a},\mathfrak{a})$.
Then by Theorem \ref{Theorem Musson} we have 
\begin{align*}
    d([h])(x,y)&=-[\beta \cup h](x,y)\\
    &=-(-1)^{|h|(|x|+|y|)}(-1)^{|h||\beta(x,y)|}[h\circ \beta](x,y) \hspace{.5cm}(\text{using \, \eqref{special case of cup product}})\\
    &=-[h\circ \beta](x,y) \hspace{.5cm}(\text{as $\beta\in Z^2(\mathfrak{g},\mathfrak{a})_0$,\,$|\beta(x,y)|=|x|+|y|$}).
\end{align*}
Now since the maps involved respect the grading of each cohomology group we have the lemma.
\end{proof}

We have seen earlier in the proof of Lemma \ref{lemma reduced5term} that $H^1(\mathfrak{a},\mathfrak{a})^\mathfrak{g}_0=Z^1(\mathfrak{a},\mathfrak{a})_0^\mathfrak{g}=End_\mathfrak{g}(\mathfrak{a})$. So the above lemma gives a cocycle description of the map $d: End_\mathfrak{g}(\mathfrak{a}) \to  H^2(\mathfrak{g}, \mathfrak{a})_0 $ in \eqref{lemma cocycle}. Therefore when we restrict to quasiregular elements we have the same description of the map $d:(QR(End_\mathfrak{g}(\mathfrak{a})), \ast) \to H^2(\mathfrak{g}, \mathfrak{a})_0$. But as we have identified $(QR(End_\mathfrak{g}(\mathfrak{a})), \ast)$ and $ Aut_\mathfrak{g}(\mathfrak{a})$ in \eqref{cor1} via the isomorphism \eqref{isom qr and autg(a)}, the description of $d:Aut_\mathfrak{g}(\mathfrak{a}) \to H^2(\mathfrak{g}, \mathfrak{a})_0 $ gets changed accordingly. Clearly using the commutativity of the diagram bellow we have $d:Aut_\mathfrak{g}(\mathfrak{a}) \to H^2(\mathfrak{g}, \mathfrak{a})_0 $ given by $d(\phi)= [\beta - \phi \circ \beta]$. This is negative of the map $\lambda_1$ of \eqref{seq1} given in  \cite{HH}. This establishes  \eqref{seq1} as a consequence of our Theorem \ref{1sttheorem} upto a sign of $\lambda_1$ which anyway does not affect the exactness at all. 
\begin{center}
\begin{tikzcd}
  (QR(End_\mathfrak{g}(\mathfrak{a})), \ast) \arrow[dd, "h\, \mapsto \, id+\,h", "\simeq" ' ]  \arrow[dr, "{d(h)= -[h\circ \beta]}" ] &  \\
  & H^2(\mathfrak{g}, \mathfrak{a})_0 \\
  Aut_\mathfrak{g}(\mathfrak{a}) \arrow[ur, dashed, "d", "{d(\phi)= [\beta - \phi \circ \beta]}" '] &
\end{tikzcd}
\end{center}

\begin{subsection}{More corollaries}
Here we present several other consequences of Theorem \ref{1sttheorem}.
\begin{corollary}
\label{cor lie algebra} For an abelian extension of Lie algebra $ 0\rightarrow \mathfrak{a} \xrightarrow{} \mathfrak{e} \xrightarrow{} \mathfrak{g} \to 0$  we have the following exact sequence of rings 
\begin{equation}
    0\to End^{ \mathfrak{a},\mathfrak{g}}(\mathfrak{e}) \xrightarrow{i} End^{\mathfrak{g}}_\mathfrak{a}(\mathfrak{e})\xrightarrow{\widetilde{res}} End_\mathfrak{g}(\mathfrak{a})\xrightarrow{d} H^2(\mathfrak{g}, \mathfrak{a}) 
\end{equation}
where the ring structures are those coming from Theorem \ref{1sttheorem}.
\end{corollary}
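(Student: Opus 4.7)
The plan is to deduce this corollary directly from Theorem \ref{1sttheorem} by regarding a Lie algebra as a Lie superalgebra concentrated in degree $0$. Concretely, given an abelian extension $0\to\mathfrak{a}\to\mathfrak{e}\to\mathfrak{g}\to 0$ of Lie algebras, we set $\mathfrak{a}_1=\mathfrak{e}_1=\mathfrak{g}_1=0$ and note that it becomes an abelian extension of Lie superalgebras in a tautological way, since the super-Jacobi identity and the super-skew-symmetry specialise to their ordinary counterparts once all sign factors $(-1)^{|x||y|}$ equal $1$.

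First I would verify the identifications of the relevant objects. Since every element is even, a ``homogeneous of degree $0$'' linear map is just a linear map, so the sets $End(\mathfrak{e}), End_\mathfrak{a}(\mathfrak{e}), End^{\mathfrak{g}}_\mathfrak{a}(\mathfrak{e}), End^{\mathfrak{a},\mathfrak{g}}(\mathfrak{e})$ and $End_\mathfrak{g}(\mathfrak{a})$ defined in Section \ref{section2} reduce to the familiar Lie algebra analogues. The ring structures given by \eqref{boxplus} and \eqref{boxtimes} on $End^{\mathfrak{g}}_\mathfrak{a}(\mathfrak{e})$ carry over verbatim, as do the obvious ring structures on $End^{\mathfrak{a},\mathfrak{g}}(\mathfrak{e})$ and $End_\mathfrak{g}(\mathfrak{a})$.

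Next I would check that the cohomology simplifies as expected. Since every cochain in $C^n(\mathfrak{g},\mathfrak{a})$ maps into the purely even space $\mathfrak{a}$ from a purely even space, the $\mathbb{Z}_2$-grading on $H^n(\mathfrak{g},\mathfrak{a})$ is concentrated in degree $0$, so $H^2(\mathfrak{g},\mathfrak{a})_0=H^2(\mathfrak{g},\mathfrak{a})$. Under this identification the map $d$ produced by Theorem \ref{1sttheorem}, described in Lemma \ref{description of d} as $d(h)=-[h\circ\beta]$, is precisely the classical boundary map of the Hochschild--Serre spectral sequence for the Lie algebra extension.

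With these identifications in hand, the exact sequence and the ring homomorphism statements of Theorem \ref{1sttheorem} specialise term-by-term to the claimed exact sequence of the corollary; the only change is that the leftmost ``$1$'' is now rewritten as ``$0$'' to match additive notation in the Lie algebra setting. I expect no genuine obstacle: the work lies entirely in checking that no sign or parity condition used in the proof of Theorem \ref{1sttheorem} becomes vacuous or ill-defined when $\mathfrak{g}_1=\mathfrak{a}_1=\mathfrak{e}_1=0$, which is a routine verification since every sign factor $(-1)^{|x||y|}$ reduces to $1$ and every ``even linear map'' reduces to ``linear map''.
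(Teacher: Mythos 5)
Your proposal is correct and matches the paper's approach exactly: the paper's entire proof is to take the odd parts trivial in Theorem \ref{1sttheorem}, and your elaboration of why all the parity conditions and sign factors become vacuous is just a more detailed version of that same specialisation. No issues.
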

\begin{proof}
Take the odd part $\mathfrak{g}_1$ of $\mathfrak{g}$ to be trivial in \eqref{Theorem1}.
\end{proof}
\begin{corollary}
\label{cor lie algebra automor}
For an abelian extension of Lie algebra $  0\rightarrow \mathfrak{a} \xrightarrow{} \mathfrak{e} \xrightarrow{} \mathfrak{g} \to 0$ we have the following exact sequence of groups:
\begin{equation}
     1\rightarrow Aut^{\mathfrak{a},\mathfrak{g}}(\mathfrak{e})\xrightarrow{i} Aut^{\mathfrak{g}}_{\mathfrak{a}}(\mathfrak{e}) \xrightarrow{\widetilde{res}} Aut_\mathfrak{g}(\mathfrak{a})\xrightarrow{d} H^2(\mathfrak{g},\mathfrak{a}).
\end{equation}
\end{corollary}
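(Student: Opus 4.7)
The plan is to deduce this from Corollary \ref{corollary} by specializing to the purely even case, exactly in the spirit of how Corollary \ref{cor lie algebra} is obtained from Theorem \ref{1sttheorem}. A Lie algebra is nothing but a Lie superalgebra whose odd part is trivial, so the abelian Lie algebra extension $0\to \a \to \mathfrak{e} \to \g \to 0$ may be viewed as an abelian extension of Lie superalgebras with $\a_1=\mathfrak{e}_1=\g_1=0$. I would first verify that under this identification every object appearing in Corollary \ref{corollary} reduces to the object of the same name in the Lie algebra setting.

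Concretely, I would check the following: since the odd parts vanish, any (super)linear map is automatically even, so the condition ``homogeneous of degree $0$'' is vacuous; hence $Aut^{\a,\g}(\mathfrak{e})$, $Aut^\g_\a(\mathfrak{e})$ and $Aut_\g(\a)$ reduce to their usual Lie algebra versions. Similarly, all cochains $C^n(\g,\a)$ are concentrated in degree $0$, so $H^2(\g,\a)_0=H^2(\g,\a)$. The induced action of $\g$ on $\a$ coming from any even section $s$ of $p$ coincides with the classical adjoint action, so $End_\g(\a)$ and $Aut_\g(\a)$ agree with the usual notions.

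Once these identifications are in place, the inclusion map $i$, the modified restriction $\widetilde{res}$ (which by \eqref{description of restilde} and the discussion following Lemma \ref{qr} is the ordinary restriction on the automorphism subgroup), and the connecting map $d$ (whose explicit cocycle description $d(\phi)=[\beta-\phi\circ\beta]$ is obtained in the discussion after Lemma \ref{description of d}) all pull back verbatim. The exactness claimed in Corollary \ref{corollary} therefore becomes exactness of the sequence in the statement, with the first term reading $1$ since the trivial automorphism is the identity.

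There is essentially no obstacle here: the argument is a straightforward specialization of Corollary \ref{corollary}, in complete parallel with the derivation of Corollary \ref{cor lie algebra} from Theorem \ref{1sttheorem}. The only mild point worth mentioning in the proof is the verification that the sign appearing in the Lie superalgebra cup product formula \eqref{special case of cup product} is trivial when all elements are even, so the description of $d$ matches the classical Lie algebra formula without modification.

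\begin{proof}
Take the odd part $\g_1$ of $\g$ to be trivial in Corollary \ref{corollary}. Under this specialization the grading on cohomology becomes concentrated in degree $0$, so $H^2(\g,\a)_0=H^2(\g,\a)$, and all automorphism groups involved coincide with their classical Lie algebra counterparts. The resulting sequence is precisely the one in the statement.
\end{proof}
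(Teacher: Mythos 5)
Your proposal is correct and matches the paper's own argument, which likewise just sets $\mathfrak{g}_1=0$ in Corollary \ref{corollary}; your extra remarks on why the grading collapses and the maps specialize verbatim are a sound (if optional) elaboration of that one-line reduction.
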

\begin{proof}
Again take the odd part $\mathfrak{g}_1$ of $\mathfrak{g}$ to be trivial in \eqref{cor1}.
\end{proof}

\begin{remark}
With a slight modification wherever necessary we get the first exact sequence of \cite[Theorem 2.8]{Jamali} from our Corollary \ref{cor lie algebra automor} above.
\end{remark}
\begin{corollary}
For a Lie superalgebra $\mathfrak{g}$ and a module $\mathfrak{a}$ over $\mathfrak{g}$ if $H^2(\mathfrak{g}, \mathfrak{a})_0=0$ then any map  $\phi \in End_\mathfrak{g}(\mathfrak{a})$ can be extended to an endomorphism of $\mathfrak{e}$ inducing identity map on $\mathfrak{g}$.
\end{corollary}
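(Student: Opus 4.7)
The plan is to deduce this directly from the exact sequence in Theorem \ref{1sttheorem}, the only subtlety being that the map $\widetilde{res}$ is not the honest restriction $\phi \mapsto \phi|_\mathfrak{a}$ but the twisted version given in \eqref{description of restilde}, so one has to feed in the correct element of $End_\mathfrak{g}(\mathfrak{a})$.

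First I would observe that the hypothesis $H^2(\mathfrak{g},\mathfrak{a})_0 = 0$ forces the tail map $d$ in \eqref{Theorem1} to be zero, so by exactness the map $\widetilde{res}: End^{\mathfrak{g}}_\mathfrak{a}(\mathfrak{e}) \to End_\mathfrak{g}(\mathfrak{a})$ is surjective. Note that the abelian extension $\mathfrak{e}$ itself is available here: since $H^2(\mathfrak{g},\mathfrak{a})_0 = 0$, any abelian extension inducing the given module structure is equivalent to the semidirect product $\mathfrak{a} \rtimes \mathfrak{g}$, so we may take $\mathfrak{e}$ to be this semidirect product (or any equivalent extension).

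Next, given $\phi \in End_\mathfrak{g}(\mathfrak{a})$, I would apply surjectivity of $\widetilde{res}$ not to $\phi$ itself but to $\phi - id$, which still lies in $End_\mathfrak{g}(\mathfrak{a})$. This produces some $\Phi \in End^{\mathfrak{g}}_\mathfrak{a}(\mathfrak{e})$ with $\widetilde{res}(\Phi) = \phi - id$. Unwinding \eqref{description of restilde}, for every $x \in \mathfrak{a}$ this reads $\Phi(x) - x = \phi(x) - x$, and hence $\Phi|_\mathfrak{a} = \phi$. Since $\Phi \in End^{\mathfrak{g}}_\mathfrak{a}(\mathfrak{e})$, by the very definition of that set we also have $\tilde{\Phi} = id$, so $\Phi$ extends $\phi$ and induces the identity on $\mathfrak{g}$, as required.

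There is no substantive obstacle here; the only point that requires any care is the cosmetic shift by $id$ dictated by the unusual ring structure on $End^{\mathfrak{g}}_\mathfrak{a}(\mathfrak{e})$ introduced in \eqref{boxplus}--\eqref{boxtimes}. Once one remembers that the additive identity of $End^{\mathfrak{g}}_\mathfrak{a}(\mathfrak{e})$ is the identity map and that $\widetilde{res}$ measures the deviation from it, the argument is a one-line consequence of the exact sequence \eqref{Theorem1}.
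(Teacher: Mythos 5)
Your proposal is correct and follows essentially the same route as the paper, which simply notes that $H^2(\mathfrak{g},\mathfrak{a})_0=0$ forces $d\equiv 0$ and invokes exactness of \eqref{Theorem1}. Your extra step of applying surjectivity of $\widetilde{res}$ to $\phi-\mathrm{id}$ rather than to $\phi$, so that the preimage genuinely restricts to $\phi$ on $\mathfrak{a}$, is a detail the paper leaves implicit and you handle it correctly.
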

\noindent Here we note that for any map $\phi \in End(\mathfrak{a})$ to get extended to a map in $End_\mathfrak{a}^\mathfrak{g}(\mathfrak{e})$ it is necessary that $\phi \in End_\mathfrak{g}(\mathfrak{a})$. Also, some of the Lie superalgebras and modules over them can be found in \cite{HH} for which $H^2(\mathfrak{g}, \mathfrak{a})_0=0$.
\begin{proof}
Since in this case the  map $d \equiv 0$ the lemma follows from exactness of \eqref{Theorem1}.
\end{proof}
\begin{corollary}
If in particular $  0\to \mathfrak{a} \to \mathfrak{e} \to \mathfrak{g} \to 0$ is a split exact sequence of Lie superalgebras then any map  $\phi \in End_\mathfrak{g}(\mathfrak{a})$ can be extended to a map in $End_\mathfrak{a}^\mathfrak{g}(\mathfrak{e})$.
\end{corollary}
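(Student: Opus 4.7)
The plan is to extract this from the exact sequence \eqref{Theorem1} by showing that in the split case the connecting map $d$ vanishes identically. Since the sequence splits, one can choose the section $\eta:\mathfrak{g}\to \mathfrak{e}$ (the one already fixed in the setup) to be an even Lie superalgebra homomorphism with $p\eta=\mathrm{id}$. With this choice, the associated cocycle
\[
\beta(x,y)=[\eta(x),\eta(y)]-\eta[x,y]
\]
of Lemma \ref{description of d} is identically zero. By that lemma, $d([h])=-[h\circ\beta]=0$ for every $h\in End_\mathfrak{g}(\mathfrak{a})$, so the map $d$ in \eqref{Theorem1} is trivial.

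Next I would invoke exactness of \eqref{Theorem1} at $End_\mathfrak{g}(\mathfrak{a})$: since $\ker d=End_\mathfrak{g}(\mathfrak{a})$ agrees with the image of $\widetilde{res}$, the map $\widetilde{res}:End^{\mathfrak{g}}_\mathfrak{a}(\mathfrak{e})\to End_\mathfrak{g}(\mathfrak{a})$ is surjective. Given $\phi\in End_\mathfrak{g}(\mathfrak{a})$, I would then pass to $\phi-\mathrm{id}_\mathfrak{a}$, which still lies in $End_\mathfrak{g}(\mathfrak{a})$ because this set is closed under subtraction. Surjectivity produces $f\in End^{\mathfrak{g}}_\mathfrak{a}(\mathfrak{e})$ with $\widetilde{res}(f)=\phi-\mathrm{id}_\mathfrak{a}$, and unfolding the description \eqref{description of restilde} gives $f(x)-x=\phi(x)-x$ for all $x\in\mathfrak{a}$, i.e.\ $f|_\mathfrak{a}=\phi$. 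Thus $f$ is the required extension of $\phi$ lying in $End^\mathfrak{g}_\mathfrak{a}(\mathfrak{e})$.

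The only subtlety to be careful about is that $\widetilde{res}$ is not literal restriction but restriction shifted by the identity (see \eqref{description of restilde}), so one must feed $\phi-\mathrm{id}_\mathfrak{a}$ (rather than $\phi$) into the surjectivity statement; this is where the argument would go wrong if done sloppily. Everything else is an immediate application of results already proved, and an alternative concrete proof is available: using $\mathfrak{e}=\mathfrak{a}\oplus\eta(\mathfrak{g})$, one may define $f(a+\eta(x)):=\phi(a)+\eta(x)$ and verify directly using the $\mathfrak{g}$-equivariance of $\phi$ and $\beta=0$ that $f$ is a Lie superalgebra endomorphism, but the spectral-sequence approach fits the theme of this section better.
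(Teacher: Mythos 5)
Your proposal is correct and follows essentially the same route as the paper: a splitting section makes $\beta\equiv 0$, hence $d\equiv 0$ by Lemma \ref{description of d}, and exactness of \eqref{Theorem1} at $End_\mathfrak{g}(\mathfrak{a})$ gives surjectivity of $\widetilde{res}$. Your extra observation that one must apply surjectivity to $\phi-\mathrm{id}_\mathfrak{a}$ rather than to $\phi$ (because $\widetilde{res}$ is the shifted restriction of \eqref{description of restilde}) is a genuine and correct refinement of a detail the paper's one-line proof leaves implicit.
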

\begin{proof}
If the exact sequence splits then we can choose a section $\eta:\mathfrak{g} \to \mathfrak{e}$ which is also a Lie superalgebra map, this implies that the map $\beta \equiv 0$. Then it follows from the description of the map $d$ in Lemma \ref{description of d} that $d \equiv 0$. Now the lemma follows from exactness of \eqref{Theorem1}. 
\end{proof}
\end{subsection}
\section{Construction of the second exact sequence}
\label{section 4}
In this section we establish our second exact sequence which is also given in \eqref{seq4}. This exact sequence helps us obtain some necessary and sufficient conditions for certain endomorphisms of $\mathfrak{g}$ to get lifted to that of $\mathfrak{e}$ fixing $\mathfrak{a}$ pointwise. The approach taken here is rather direct, considering appropriate maps between the monoids of endomorphisms and checking for exactness at each stage. We state our result in the next theorem:
\begin{theorem}
\label{Theorem2}
Let $0 \xrightarrow{i} \mathfrak{a} \to \mathfrak{e}\xrightarrow{p} \mathfrak{g} \to 0$ be an abelian extension of the Lie superalgebra $\mathfrak{g}$.
Then we have the following exact sequence of monoids:
\begin{equation}
\label{theorem2 seq}
     1\rightarrow End^{\mathfrak{a},\mathfrak{g}}(\mathfrak{e})\xrightarrow{i} End^{\mathfrak{a}}(\mathfrak{e}) \xrightarrow{\sigma} End^\mathfrak{a}(\mathfrak{g})\xrightarrow{\chi}H^2(\mathfrak{g},\mathfrak{a})_0 
\end{equation}
\noindent Here kernel of a monoid homomorphism $t:M\to N$ is defined to be the set $ker\,t:=\{m\in M \,|\, t(m)=e_N\}$ where $e_N$ is the identity of $N$.
\end{theorem}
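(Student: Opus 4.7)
The plan is to take a direct approach, without recourse to spectral sequences: define the maps $\sigma$ and $\chi$, check they are well-defined, and verify exactness termwise. For $\sigma$, I would set $\sigma(\phi):=\tilde\phi$, using the induced map construction recalled in Section \ref{section2}. The nontrivial point is that $\tilde\phi$ actually lies in $End^{\mathfrak{a}}(\mathfrak{g})$: writing $\phi s(g) = s\tilde\phi(g) + r(g)$ with $r(g)\in\mathfrak{a}$ and using that $\mathfrak{a}$ is abelian together with $\phi|_{\mathfrak{a}}=id$, one gets $\tilde\phi(g)\cdot a = [s\tilde\phi(g),a] = [\phi s(g),a] = [\phi s(g),\phi(a)] = \phi([s(g),a]) = [s(g),a] = g\cdot a$. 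The compositional nature of $\phi\mapsto\tilde\phi$ then makes $\sigma$ a monoid homomorphism, and $i$ is visibly one as well.

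For $\chi$, let $\beta\in Z^2(\mathfrak{g},\mathfrak{a})_0$ be the 2-cocycle associated to \eqref{extension} via our fixed section $s$ (cf.\ the discussion following Lemma \ref{description of d}), and write $(\psi^{\ast}\beta)(g_1,g_2):=\beta(\psi g_1,\psi g_2)$. Define $\chi(\psi):=[\beta-\psi^{\ast}\beta]\in H^2(\mathfrak{g},\mathfrak{a})_0$. Because $\psi\in End^{\mathfrak{a}}(\mathfrak{g})$ preserves the $\mathfrak{g}$-action on $\mathfrak{a}$, a direct substitution shows that $\psi^{\ast}\beta$ again satisfies the 2-cocycle identity; a change of section replaces $\beta$ by $\beta+\delta\eta$, yielding the same class since $\psi^{\ast}\delta\eta=\delta(\eta\circ\psi)$. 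Evidently $\chi(id)=0$, so $\chi$ is a well-defined map of pointed sets.

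Exactness at $End^{\mathfrak{a},\mathfrak{g}}(\mathfrak{e})$ is simply the injectivity of inclusion. Exactness at $End^{\mathfrak{a}}(\mathfrak{e})$ is immediate since, for $\phi\in End^{\mathfrak{a}}(\mathfrak{e})$, the condition $\sigma(\phi)=id$ combined with $\phi|_{\mathfrak{a}}=id$ is the defining property of $End^{\mathfrak{a},\mathfrak{g}}(\mathfrak{e})$. The heart of the proof is exactness at $End^{\mathfrak{a}}(\mathfrak{g})$: for $\psi=\sigma(\phi)$ with $\phi\in End^{\mathfrak{a}}(\mathfrak{e})$, writing $\phi s(g)=s\psi(g)+h(g)$ with $h:\mathfrak{g}\to\mathfrak{a}$ even and expanding $\phi([s(g_1),s(g_2)])=[\phi s(g_1),\phi s(g_2)]$ (using that $\mathfrak{a}$ is abelian) gives $\beta-\psi^{\ast}\beta=\delta h$, so $\chi(\psi)=0$. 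Conversely, given an even $h:\mathfrak{g}\to\mathfrak{a}$ with $\beta-\psi^{\ast}\beta=\delta h$, set $\phi(a+s(g)):=a+h(g)+s\psi(g)$; the preceding computation reversed, together with $\psi\in End^{\mathfrak{a}}(\mathfrak{g})$ for the cross brackets $[s(g),a]$, shows that $\phi$ is a Lie superalgebra endomorphism lying in $End^{\mathfrak{a}}(\mathfrak{e})$ with $\sigma(\phi)=\psi$. The main technical obstacle I anticipate is the careful super-sign bookkeeping required to verify that $\psi^{\ast}\beta\in Z^2(\mathfrak{g},\mathfrak{a})_0$ and that the forward/backward expansion really matches $\beta-\psi^{\ast}\beta=\delta h$ with $\phi$ preserving $[s(g_1),s(g_2)]$; these are otherwise routine super-analogues of the corresponding Lie-algebra calculations.
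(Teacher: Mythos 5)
Your proposal is correct and follows essentially the same direct route as the paper: the paper likewise defines $\sigma(\gamma)=p\gamma s$, proves via the decomposition $\gamma s(g)=\lambda(g)+s\psi(g)$ that $\sigma$ lands in $End^{\mathfrak{a}}(\mathfrak{g})$ and is a monoid homomorphism, sets $\chi(\psi)=[\psi^{\ast}\beta-\beta]$ (your sign convention is the opposite, which does not affect exactness), and establishes exactness at the third term by exactly the forward/backward expansion of $\phi s(g)=s\psi(g)+h(g)$ and the reconstruction $\phi(a+s(g))=a+h(g)+s\psi(g)$. The only cosmetic differences are that the paper verifies the multiplicativity of $\sigma$ and the section-independence of $\chi$ by explicit computation (the latter by citation), where you sketch them.
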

Before we proceed to prove the above theorem, note that the first three terms are monoids with respect to the composition of maps with identity maps being the identity elements. The map $\sigma$ is the obvious map defined by $\sigma(\gamma):=\psi$ such that the following diagram is commutative:
\begin{center}
\begin{tikzcd}
   0\arrow{r} & \mathfrak{a}\arrow{r}{i}\arrow[equal]{d} & \mathfrak{e}\arrow{r}{p}\arrow{d}{\gamma} & \mathfrak{g}\arrow{r}\arrow[dotted]{d}{{\psi}} & 0 \\
  0\arrow{r} & \mathfrak{a}\arrow{r}{i} & \mathfrak{e}\arrow{r}{p} & \mathfrak{g}\arrow{r} & 0 
\end{tikzcd}
\end{center}
\noindent We have seen such a map $\psi$ can be described as $\psi= p \gamma s$ where $s$ is a section of $p$.

Now we prove a couple of simple lemmas which we shall use in the proof of Theorem \ref{Theorem2}.
\begin{lemma}
\label{lambda}
For every pair of endomorphisms $(\gamma,\psi)$ with $\sigma(\gamma)=\psi$ there is a homogeneous linear map $\lambda:\mathfrak{g}\to \mathfrak{a}$ of degree $0$ such that $\gamma(s(g))= \lambda(g)+ s\psi(g)$.
\end{lemma}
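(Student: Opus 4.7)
The plan is to define $\lambda$ directly and then verify the three required properties in turn: that it actually takes values in $\mathfrak{a}$, that it is linear, and that it is homogeneous of degree $0$.

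First I would simply set
\[
\lambda(g) := \gamma(s(g)) - s\psi(g)
\]
for every $g \in \mathfrak{g}$, so that the desired identity $\gamma(s(g)) = \lambda(g) + s\psi(g)$ holds tautologically. The only substantive point is then to check that $\lambda(g)$ actually lands in the copy $i(\mathfrak{a})$ of $\mathfrak{a}$ inside $\mathfrak{e}$, after which I identify $\mathfrak{a}$ with $i(\mathfrak{a})$ as in the rest of the paper.

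For this I would apply $p$ to $\lambda(g)$ and compute
\[
p(\lambda(g)) \;=\; p\gamma s(g) - ps\psi(g) \;=\; \sigma(\gamma)(g) - \psi(g) \;=\; \psi(g) - \psi(g) \;=\; 0,
\]
using that $s$ is a section (so $ps = \mathrm{id}_{\mathfrak{g}}$) and the defining property $\sigma(\gamma) = p\gamma s = \psi$ from the commutative diagram preceding the lemma. Exactness of \eqref{extension} at $\mathfrak{e}$ then gives $\lambda(g) \in \ker p = i(\mathfrak{a})$, so under the standing identification $\mathfrak{a} \simeq i(\mathfrak{a})$ we may regard $\lambda$ as a map $\mathfrak{g} \to \mathfrak{a}$.

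Linearity of $\lambda$ is immediate, since $\gamma$, $s$, and $\psi$ are all linear. For the parity, recall that the section $s$ was fixed to be even throughout the article, $\gamma$ is even because it is a Lie superalgebra endomorphism, and $\psi$ is even as an element of $End^{\mathfrak{a}}(\mathfrak{g})$; hence both $\gamma \circ s$ and $s \circ \psi$ are even, and therefore so is their difference $\lambda$. There is no real obstacle here — the definition of $\lambda$ is forced by the diagram and every check is a one-liner; the only thing to be cautious about is the implicit identification of $\mathfrak{a}$ with its image $i(\mathfrak{a}) \subseteq \mathfrak{e}$, which we invoke without further comment.
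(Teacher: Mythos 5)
Your proof is correct and follows essentially the same route as the paper: define $\lambda(g):=\gamma s(g)-s\psi(g)$, use $ps=\mathrm{id}$ and $\sigma(\gamma)=p\gamma s=\psi$ to see that $p(\lambda(g))=0$ so $\lambda(g)\in i(\mathfrak{a})$, and then note linearity and evenness from the corresponding properties of $\gamma$, $s$, $\psi$. Your write-up is in fact slightly more explicit than the paper's about the parity check and the identification of $\mathfrak{a}$ with $i(\mathfrak{a})$.
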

\noindent Such a map $\lambda$ has also been constructed in \cite[Theorem 1]{HH} and the lemma follows from there. However for the sake of completeness we prove it here.
\begin{proof}
Since $ps=1$ we can write $p \gamma s(g)=\psi(g)=ps \psi(g)$. This implies $p(\gamma s(g)-s\psi(g))=0$. So $\gamma s(g)-s\psi(g) \in \mathfrak{a}$. Now we define $\lambda(g):=\gamma s(g)-s\psi(g)\, \forall g\in \mathfrak{g}$. It can be very easily checked that $\lambda$ is homogeneous of degree $0$, hence the lemma follows.
\end{proof}

\begin{lemma}
\label{monoid homomorphism}
The image of the map $\sigma$ lies in $End^{\mathfrak{a}}(\mathfrak{g})$ and $\sigma:End^{\mathfrak{a}}(\mathfrak{e}) \to End^{\mathfrak{a}}(\mathfrak{g}) $ is a monoid homomorphism.
\end{lemma}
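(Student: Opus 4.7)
The plan is to verify the two assertions in turn: first that $\sigma(\gamma)\in End^{\mathfrak{a}}(\mathfrak{g})$ for every $\gamma\in End^{\mathfrak{a}}(\mathfrak{e})$, and then that $\sigma$ respects composition and sends $\mathrm{id}_{\mathfrak{e}}$ to $\mathrm{id}_{\mathfrak{g}}$. Throughout I would write $\psi:=\sigma(\gamma)=p\gamma s$ and invoke Lemma \ref{lambda} to obtain an even linear map $\lambda:\mathfrak{g}\to\mathfrak{a}$ with $\gamma s(g)=\lambda(g)+s\psi(g)$; this decomposition is the engine of both parts.

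For the image condition, I need to check $\psi(g)\cdot a=g\cdot a$ for all $g\in\mathfrak{g}$, $a\in\mathfrak{a}$. Unpacking the induced action, this is the identity $[s\psi(g),a]=[s(g),a]$. I would substitute $s\psi(g)=\gamma s(g)-\lambda(g)$, discard the term $[\lambda(g),a]$ because $\mathfrak{a}$ is abelian, and then use that $[s(g),a]\in\mathfrak{a}$ (since $\mathfrak{a}$ is an ideal) together with $\gamma$ being a Lie superalgebra homomorphism and $\gamma|_{\mathfrak{a}}=\mathrm{id}$ to rewrite $[\gamma s(g),a]=\gamma[s(g),a]=[s(g),a]$.

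For the monoid part, preservation of the identity is immediate from $ps=\mathrm{id}_{\mathfrak{g}}$. For composition, given $\gamma_1,\gamma_2\in End^{\mathfrak{a}}(\mathfrak{e})$ with $\psi_i:=\sigma(\gamma_i)$, I would apply Lemma \ref{lambda} to $\gamma_2$ to write $\gamma_2 s(g)=\lambda_2(g)+s\psi_2(g)$, then apply $\gamma_1$ (using $\gamma_1|_{\mathfrak{a}}=\mathrm{id}$ to leave the $\lambda_2(g)$ summand untouched), and finally apply $p$, which annihilates the $\mathfrak{a}$-component and yields $p\gamma_1 s\psi_2(g)=\psi_1\psi_2(g)$.

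I do not expect a real obstacle here: once Lemma \ref{lambda} supplies the splitting $\gamma s(g)=\lambda(g)+s\psi(g)$, every verification reduces to exploiting that $\mathfrak{a}$ is an abelian ideal fixed pointwise by each $\gamma\in End^{\mathfrak{a}}(\mathfrak{e})$. The most delicate bookkeeping is the image claim $\psi\in End^{\mathfrak{a}}(\mathfrak{g})$, since it is the only place where one must simultaneously invoke that $\gamma$ is a Lie superalgebra homomorphism, that $\gamma|_{\mathfrak{a}}=\mathrm{id}$, and that $\mathfrak{a}$ is an ideal of $\mathfrak{e}$.
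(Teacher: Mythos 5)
Your proposal is correct and follows essentially the same route as the paper: both parts hinge on the decomposition $\gamma s(g)=\lambda(g)+s\psi(g)$ from Lemma \ref{lambda}, the image claim is the same computation $[s\psi(g),a]=[\gamma s(g),a]=\gamma[s(g),a]=[s(g),a]$ read in the opposite direction, and the composition check is identical. No gaps.
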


\begin{proof}
Let $\gamma \in End^{\mathfrak{a}}(\mathfrak{e})$, then $\gamma(a)=a\, \forall a\in \mathfrak{a}$. So
\begin{align*}
    [s(g), a]&= \gamma([s(g),a]) \\
             &=[\gamma(s(g)), a] \\
             &=[\lambda(g)+ s\psi(g) , a]\hspace{1cm} \text{(using Lemma \ref{lambda} )} \\
             &=[s\psi(g), a]  \hspace{.5cm}\forall\,g\in\mathfrak{g},\,a \in \mathfrak{a}.
\end{align*}
This implies $\psi(=\sigma(\gamma)) \in End^{\mathfrak{a}}(\mathfrak{g})$.

For the second part let $\gamma_1, \gamma_2 \in End^{\mathfrak{a}}(\mathfrak{e})$. Also let $\sigma(\gamma_i)=\psi_i$ for $i=1,2$ and the corresponding maps as in Lemma \ref{lambda} are $\lambda_1$ and $\lambda_2$. Then 
\begin{align*}
    \sigma(\gamma_1 \circ \gamma_2)(g)&= p (\gamma_1 \circ \gamma_2) s(g)\\
                                      &= p\gamma_1(\lambda_2(g)+ s\psi_2(g)) \hspace{1cm} \text{(using Lemma \ref{lambda} )}\\
                                      &=p( \lambda_2(g)+\lambda_1(\psi_2(g))+ s \psi_1(\psi_2(g))) \\
                                      &= p s \psi_1(\psi_2(g))\\
                                      &=\psi_1\circ\psi_2(g)=\sigma(\gamma_1) \circ \sigma(\gamma_2)(g).
\end{align*}
Also as $\sigma(id)=id$, the lemma follows.
\end{proof}
Now to each map in $End^{\mathfrak{a}}(\mathfrak{g})$ we associate a 2-cocycle in $H^2(\mathfrak{g}, \mathfrak{a})_0$ in the following way. Let $\psi \in End^{\mathfrak{a}}(\mathfrak{g})$. Define $\Psi: \mathfrak{g} \times \mathfrak{g}\to \mathfrak{a}$ by $\Psi(g,h):= \beta(\psi(g),\psi(h))-\beta(g,h)$;$g,h\in \mathfrak{g}$ where $\beta$ is defined in Lemma \ref{description of d}. It is well-known that $\beta$ is a homogeneous 2-cocycle of degree $0$ and with the help of that it can be easily proved that $\Psi$ is also a homogeneous 2-cocycle of degree $0$. So $[\Psi] \in H^2(\mathfrak{g}, \mathfrak{a})_0$ where $[\,\cdot\,]$ denotes the cohomology classes. Now we define $\chi:End^{\mathfrak{a}}(\mathfrak{g})\to  H^2(\mathfrak{g}, \mathfrak{a})_0$ given by $\chi(\psi):=\Psi$. Then as in \cite[Lemma 4.3]{HH} one can show that $\chi$ is well-defined that is it does not depend on the section used in the definition of $\beta$(denoted by $\theta$ in \cite{HH}). The map $\chi$ is not in general a  monoid homomorphism but its kernel will have usual meaning. Keeping the above discussions in mind now we proceed to prove Theorem \ref{Theorem2}:

\begin{proof}{(Proof of Theorem \ref{Theorem2})}

\noindent It is easy to see that $ker(\sigma)=End^{\mathfrak{a},\mathfrak{g}}(\mathfrak{e})$. So $i$ being the inclusion map clearly \eqref{theorem2 seq} is exact at the first and second term. Now to check exactness at the third term, let $End^{\mathfrak{a}}(\mathfrak{g}) \ni \psi= \sigma(\gamma)$ for some $\gamma \in End^{\mathfrak{a}}(\mathfrak{e})$. Then from Lemma \ref{lambda} we have $\gamma s(g)=\lambda(g)+ s \psi(g)$ for $g\in \mathfrak{g}$. Using this fact along with the facts that $\mathfrak{a}$ is abelian and $\gamma|_\mathfrak{a}=id$, we can write
\begin{align*}
   \beta(\psi(g),\psi(h))&=[s\psi(g), s\psi(h)]-s[\psi(g),\psi(h)]\\
   &= [s\psi(g), s\psi(h)]-s\psi([g,h])\\
   &=[\gamma s(g)-\lambda(g), \gamma s(h)-\lambda(h)]-\gamma s([g,h])+\lambda([g,h])\\
   &=[\gamma s(g), \gamma s(h)]-[\gamma s(g), \lambda(h)]+(-1)^{|g||h|}[\gamma s(h), \lambda(g)]-\gamma s([g,h])+\lambda([g,h])\\
   &= \gamma([s(g),s(h)]-s[g,h])-\gamma( [s(g), \lambda(h)])+(-1)^{|g||h|}\gamma([s(h), \lambda(g)])+\lambda([g,h]) \\
   &=\beta(g,h)-d(\lambda)(g,h)
\end{align*}
 for homogeneous elements $g,h \in \mathfrak{g}$ and then extend linearly, here $d$ is the coboundary map on the 1-cochains.
 
 \noindent Therefore $\chi(\psi)=[\Psi]=  [\beta(\psi(g),\psi(h))-\beta(g,h)]=[0]$, consequently $\psi\in ker\,\chi$ and $img \, \sigma \subseteq  ker\,\chi$. Now to prove the other inclusion let $\psi\in End^{\mathfrak{a}}(\mathfrak{g})$ be such that $\psi \in ker\, \chi$. Then using the definition of $\chi$, we can have a linear homogeneous map $\lambda:\mathfrak{g}\to \mathfrak{a}$ of degree 0 such that $\beta(\psi(g),\psi(h))-\beta(g,h)= -d(\lambda)(g,h)=-[s(g), \lambda(h)]+(-1)^{|g||h|}[s(h),\lambda(g)]+\lambda([g,h])$ for homogeneous $g,h \in \mathfrak{g}$. Then it turns out that the map $\gamma:\mathfrak{e}\to \mathfrak{e}$ defined by $\gamma(a\oplus s(g)):= a+ \lambda(g)+s\psi(g);a\in \mathfrak{a}, g\in \mathfrak{g}$ is a Lie superalgebra homomorphism. Moreover $\gamma(a)=a$ for $a\in\mathfrak{a}$ and $\sigma(\gamma)= p\gamma s=\psi$. Taking all these together we get $\gamma \in End^{\mathfrak{a}}(\mathfrak{g})$ and $ ker\,\chi\subseteq img \, \sigma$. Consequently \eqref{theorem2 seq} is exact at the third term, then result of Lemma \ref{monoid homomorphism} completes the proof of the theorem.
\end{proof}
Clearly the proof of the theorem above, unlike the proof of Theorem \ref{1sttheorem}, does not use any cohomological exact sequence and in view of this fact we ask the following question:
\begin{question}
Do we have any known cohomological exact sequence from which \eqref{theorem2 seq} would follow ?
\end{question}
\begin{remark}
The answer to this question for the case of groups is yes. An exact sequence like \eqref{theorem2 seq} for groups was obtained in \cite{Mariam} by making use of a non-abelian cohomology exact sequence though the article does not provide any description of the final map. Also that proof heavily relies on the availability of a set bijection $End(G)\simeq Z^1(G,G)$, the set of derivations or the (non-abelian) $1$-cocycles of $G$ with coefficients in $G$. However, we do not know of any such bijection for Lie superalgebras.
\end{remark}

Now we have the following immediate corollaries of the above theorem.
\begin{corollary}
Let  $0 \xrightarrow{i} \mathfrak{a} \to \mathfrak{e}\xrightarrow{p} \mathfrak{g} \to 0$ be an abelian extension of the Lie superalgebra $\mathfrak{g}$. Then the following is an exact sequence of groups:
\begin{equation}
\label{1st cor of theorem2}
     1\rightarrow Aut^{\mathfrak{a},\mathfrak{g}}(\mathfrak{e})\xrightarrow{i} Aut^{\mathfrak{a}}(\mathfrak{e}) \xrightarrow{\sigma} Aut^\mathfrak{a}(\mathfrak{g})\xrightarrow{\chi}H^2(\mathfrak{g},\mathfrak{a})_0 
\end{equation}
\end{corollary}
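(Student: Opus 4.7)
My plan is to obtain~\eqref{1st cor of theorem2} from Theorem~\ref{Theorem2} by restricting each map in the monoid sequence~\eqref{theorem2 seq} to the corresponding set of invertible elements and then verifying that exactness survives this restriction. The passage from endomorphisms to automorphisms in the middle and third terms will be handled by the $5$-lemma, which is the only nontrivial tool needed here.

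First I would check that the restricted maps take values where expected. The inclusion $Aut^{\mathfrak{a},\mathfrak{g}}(\mathfrak{e})\hookrightarrow Aut^{\mathfrak{a}}(\mathfrak{e})$ is immediate from definitions. For $\sigma$, if $\gamma\in Aut^{\mathfrak{a}}(\mathfrak{e})$, then in the commutative square defining $\psi=\sigma(\gamma)$ the left vertical map is $id_{\mathfrak{a}}$ and the middle vertical map is an isomorphism, so the $5$-lemma forces $\psi$ to be bijective; combined with $\psi\in End^{\mathfrak{a}}(\mathfrak{g})$ from Lemma~\ref{monoid homomorphism}, this gives $\psi\in Aut^{\mathfrak{a}}(\mathfrak{g})$. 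Multiplicativity of the restricted $\sigma$ and compatibility of $\chi$ are inherited from the monoid-level setup.

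Next I would address exactness at the first two terms. Injectivity of $i$ is clear. At $Aut^{\mathfrak{a}}(\mathfrak{e})$, the kernel of the restricted $\sigma$ equals $\ker(\sigma)\cap Aut^{\mathfrak{a}}(\mathfrak{e}) = End^{\mathfrak{a},\mathfrak{g}}(\mathfrak{e})\cap Aut^{\mathfrak{a}}(\mathfrak{e})$, and the identity $End^{\mathfrak{a},\mathfrak{g}}(\mathfrak{e})=Aut^{\mathfrak{a},\mathfrak{g}}(\mathfrak{e})$ recorded in Section~\ref{section2} (itself a $5$-lemma consequence) reduces this to $Aut^{\mathfrak{a},\mathfrak{g}}(\mathfrak{e})$. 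The inclusion $\mathrm{img}\,\sigma\subseteq\ker\chi$ at the third term is inherited directly from Theorem~\ref{Theorem2}.

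The step I expect to be the main obstacle is the reverse inclusion at $Aut^{\mathfrak{a}}(\mathfrak{g})$: given $\psi\in Aut^{\mathfrak{a}}(\mathfrak{g})$ with $\chi(\psi)=0$, Theorem~\ref{Theorem2} supplies some $\gamma\in End^{\mathfrak{a}}(\mathfrak{e})$ with $\sigma(\gamma)=\psi$, but a priori this $\gamma$ is only an endomorphism. To upgrade $\gamma$ to an automorphism I would apply the $5$-lemma once more to the same commutative square: since $id_{\mathfrak{a}}$ and $\psi$ are now both isomorphisms, $\gamma$ must be an isomorphism, hence $\gamma\in Aut^{\mathfrak{a}}(\mathfrak{e})$ and $\psi$ lies in the image of the restricted $\sigma$. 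Thus the only genuine work beyond restricting Theorem~\ref{Theorem2} is this concluding application of the $5$-lemma.
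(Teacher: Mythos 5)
Your proposal is correct and follows essentially the same route as the paper, which simply restricts the maps of Theorem~\ref{Theorem2} to the invertible elements of each monoid; you have merely spelled out the diagram-chase details (the $5$-lemma upgrades of $\psi$ and of the preimage $\gamma$) that the paper leaves implicit.
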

\begin{proof}
In Theorem \ref{Theorem2} above if we restrict our maps to the corresponding groups of invertible endomorphisms of the monoids we get the required exact sequence.
\end{proof}
\begin{remark}
The exact sequence \eqref{1st cor of theorem2} resembles the second exact sequence obtained in \cite[Theorem 4.6]{HH}. In that sense Theorem \ref{Theorem2} generalises the second exact sequence of \cite[Theorem 4.6]{HH}. The only point we should note here is that the maps $\chi$ in Theorem \ref{Theorem2} (and hence in \eqref{1st cor of theorem2}) and $\lambda_2$ in \cite[Theorem 4.6]{HH} are not exactly the same. Nevertheless, the above corollary also gives necessary and sufficient conditions for the lifting of automorphisms of $\mathfrak{g}$.
\end{remark}
\begin{corollary}
Let  $0 \xrightarrow{i} \mathfrak{a} \to \mathfrak{e}\xrightarrow{p} \mathfrak{g} \to 0$ be an abelian extension of the Lie superalgebra $\mathfrak{g}$. If this extension splits or if $H^2(\mathfrak{g},\mathfrak{a})=0$ then any endomorphism of $\mathfrak{g}$ which keeps the action of $\mathfrak{g}$ on $\mathfrak{a}$ invariant can be lifted to $\mathfrak{g}$ fixing $\mathfrak{a}$ pointwise. Moreover if the extension is central then any endomorphism of $\mathfrak{g}$ can be lifted to $\mathfrak{g}$ fixing $\mathfrak{a}$ pointwise.
\end{corollary}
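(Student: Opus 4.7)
The plan is to deduce this corollary directly from the exact sequence of Theorem \ref{Theorem2},
$$1\to End^{\mathfrak{a},\mathfrak{g}}(\mathfrak{e})\xrightarrow{i} End^{\mathfrak{a}}(\mathfrak{e}) \xrightarrow{\sigma} End^{\mathfrak{a}}(\mathfrak{g})\xrightarrow{\chi}H^2(\mathfrak{g},\mathfrak{a})_0.$$
By exactness at the third term, an endomorphism $\psi\in End^{\mathfrak{a}}(\mathfrak{g})$ lifts to some $\gamma\in End^{\mathfrak{a}}(\mathfrak{e})$ (meaning $\gamma$ fixes $\mathfrak{a}$ pointwise and $\sigma(\gamma)=\psi$) precisely when the obstruction $\chi(\psi)\in H^2(\mathfrak{g},\mathfrak{a})_0$ vanishes. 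So the whole task is to check $\chi(\psi)=0$ under each hypothesis.

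If $H^2(\mathfrak{g},\mathfrak{a})=0$, then $H^2(\mathfrak{g},\mathfrak{a})_0=0$ as well, and there is nothing to verify: $\chi(\psi)=0$ trivially for every $\psi\in End^{\mathfrak{a}}(\mathfrak{g})$. If instead the extension splits, I would choose the section $s$ of $p$ to be a Lie superalgebra homomorphism; then the associated 2-cocycle $\beta(x,y)=[s(x),s(y)]-s[x,y]$ vanishes identically, and from the explicit description $\chi(\psi)=[\beta(\psi(-),\psi(-))-\beta(-,-)]$ established just before the proof of Theorem \ref{Theorem2} we read off $\chi(\psi)=0$ for every $\psi\in End^{\mathfrak{a}}(\mathfrak{g})$.

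For the \emph{moreover} clause, the observation is that when $\mathfrak{a}$ is central in $\mathfrak{e}$ the induced action of $\mathfrak{g}$ on $\mathfrak{a}$, given by $g\cdot a=[s(g),a]$, is identically zero. Consequently the defining condition of $End^{\mathfrak{a}}(\mathfrak{g})$, namely $\psi(g)\cdot a=g\cdot a$ for all $g,a$, becomes the tautology $0=0$ and is satisfied by every $\psi\in End(\mathfrak{g})$. Thus $End^{\mathfrak{a}}(\mathfrak{g})=End(\mathfrak{g})$ in the central case, and the first part of the corollary therefore upgrades from action-preserving endomorphisms to arbitrary endomorphisms.

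I expect no real obstacle here, since all three claims reduce to verifying the vanishing of $\chi(\psi)$, which is immediate from either the direct vanishing of $H^2(\mathfrak{g},\mathfrak{a})_0$, the vanishing of the defining cocycle $\beta$, or the triviality of the action. The only piece of work is remembering the explicit cocycle formula for $\chi$, which is already in hand from the proof of Theorem \ref{Theorem2}.
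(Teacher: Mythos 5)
Your proposal is correct and follows essentially the same route as the paper: splitting gives $\beta\equiv 0$ hence $\chi\equiv 0$, the hypothesis $H^2(\mathfrak{g},\mathfrak{a})=0$ gives $\chi\equiv 0$ directly, and in the central case the trivial action yields $End^{\mathfrak{a}}(\mathfrak{g})=End(\mathfrak{g})$. Your extra remark that the choice of a homomorphic section is legitimate because $\chi$ is section-independent is a nice touch but matches what the paper already notes.
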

\begin{proof}
Clearly if the extension splits then $\beta\equiv 0$ which in turn implies $\chi \equiv 0$, which is also the case when $H^2(\mathfrak{g},\mathfrak{a})=0$ itself. This implies the image of the map $\sigma$ is whole of $End^\mathfrak{a}(\mathfrak{g})$ and the first part of the lemma is proved. Now in addition if the extension is central then the action of $\mathfrak{g}$ on $\mathfrak{a}$ becomes trivial and in that case $End^\mathfrak{a}(\mathfrak{g})=End(\mathfrak{g})$ from which the second part of the lemma also follows.
\end{proof}

\begin{section}{Application to semidirect products}
\label{section 5}
In this section as an application of our results we describe certain automorphism groups of semidirect products of Lie superalgebras. Let $\mathfrak{g}$ be a Lie superalgebra and $\mathfrak{a}$ a module over $\mathfrak{g}$. Then the semidirect product of $\mathfrak{a}$ and $\mathfrak{g}$ is another Lie superalgebra denoted by $ \mathfrak{g} \ltimes\mathfrak{a}$ which is the vector space $ \mathfrak{g}  \oplus \mathfrak{a}$ on which the bracket is defined by $[(x,a),\,(y,b)]:=( [x,y],\,x\cdot b-(-1)^{|a||y|}y\cdot a )$ for all homogeneous elements $x,y \in \mathfrak{g};a,b \in \mathfrak{a}$ and then extended linearly. Then $\mathfrak{a}$ can be seen as an abelian ideal in $ \mathfrak{g} \ltimes\mathfrak{a}$ while $\mathfrak{g}$ is just a subalgebra. Corresponding to this semidirect product one obtains the following abelian extension of the Lie superalgebra $\mathfrak{g}$:
 \begin{equation}
 \label{semidirect product exact sequence}
     0 \to \mathfrak{a} \xrightarrow{a\mapsto(0,a)} \mathfrak{g} \ltimes\mathfrak{a} \xrightarrow{(g,a)\mapsto g} \mathfrak{g} \to 0
 \end{equation}
 Then it happens that the induced action of $\mathfrak{g}$ on $\mathfrak{a}$ coming from \eqref{semidirect product exact sequence} coincides with the action we started with. Also \eqref{semidirect product exact sequence} splits, a section which is also a Lie superalgebra homomorphism is given by the map $s:\mathfrak{g} \to  \mathfrak{g} \ltimes\mathfrak{a}$ defined by $s(g):=(g,0)$. Now we have the following description of certain automorphism groups:
 \begin{theorem}
 \label{theorem3}
Let $\mathfrak{g}$ be a Lie superalgebra and $\mathfrak{a}$ a module over $\mathfrak{g}$. Then we have the following isomorphisms of groups:
 \begin{enumerate}
      \item $Aut^{\mathfrak{g}}_{\mathfrak{a}}(\mathfrak{g} \ltimes\mathfrak{a})\simeq  Aut_\mathfrak{g}(\mathfrak{a}) \ltimes Aut^{\mathfrak{a},\mathfrak{g}}(\mathfrak{g} \ltimes\mathfrak{a})$  
     \item  $Aut^{\mathfrak{a}}(\mathfrak{g} \ltimes\mathfrak{a}) \simeq    Aut^\mathfrak{a}(\mathfrak{g}) \ltimes Aut^{\mathfrak{a},\mathfrak{g}}(\mathfrak{g} \ltimes\mathfrak{a})$
 \end{enumerate}
 \end{theorem}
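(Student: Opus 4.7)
The plan is to derive both isomorphisms by exhibiting each of the relevant short exact sequences of groups as a split extension. The exactness of the extension \eqref{semidirect product exact sequence} splits with a Lie superalgebra section $s(g)=(g,0)$, so by Lemma \ref{description of d} we have $\beta\equiv 0$, and hence the connecting map $d$ in Corollary \ref{corollary} vanishes as well as the map $\chi$ in \eqref{1st cor of theorem2}. Consequently the two long exact sequences shrink to short exact sequences
\begin{equation*}
1\to Aut^{\mathfrak{a},\mathfrak{g}}(\mathfrak{g}\ltimes\mathfrak{a})\xrightarrow{i} Aut^{\mathfrak{g}}_{\mathfrak{a}}(\mathfrak{g}\ltimes\mathfrak{a})\xrightarrow{\widetilde{res}} Aut_{\mathfrak{g}}(\mathfrak{a})\to 1
\end{equation*}
\begin{equation*}
1\to Aut^{\mathfrak{a},\mathfrak{g}}(\mathfrak{g}\ltimes\mathfrak{a})\xrightarrow{i} Aut^{\mathfrak{a}}(\mathfrak{g}\ltimes\mathfrak{a})\xrightarrow{\sigma} Aut^{\mathfrak{a}}(\mathfrak{g})\to 1
\end{equation*}
with normal subgroup $Aut^{\mathfrak{a},\mathfrak{g}}(\mathfrak{g}\ltimes\mathfrak{a})$ in each case (normality is automatic from the kernel position, but worth noting explicitly).

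The heart of the proof is to exhibit a group-theoretic splitting of each sequence. For the first, given $\phi\in Aut_{\mathfrak{g}}(\mathfrak{a})$, I would define $\widetilde{\phi}:\mathfrak{g}\ltimes\mathfrak{a}\to\mathfrak{g}\ltimes\mathfrak{a}$ by $\widetilde{\phi}(g,a):=(g,\phi(a))$. The verification that $\widetilde{\phi}$ is a Lie superalgebra homomorphism reduces to the identity $\phi(x\cdot b)=x\cdot\phi(b)$, which holds because $\phi$ is a $\mathfrak{g}$-module map; evenness of $\phi$ ensures the sign conventions are preserved. Similarly, for the second sequence I would assign to $\psi\in Aut^{\mathfrak{a}}(\mathfrak{g})$ the map $\widehat{\psi}(g,a):=(\psi(g),a)$; the verification that $\widehat{\psi}$ respects the bracket uses exactly the defining property $\psi(g)\cdot a=g\cdot a$ of $End^{\mathfrak{a}}(\mathfrak{g})$. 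In each case one immediately checks $\widetilde{res}(\widetilde{\phi})=\phi$ and $\sigma(\widehat{\psi})=\psi$, and that $\phi\mapsto\widetilde{\phi}$, $\psi\mapsto\widehat{\psi}$ are group homomorphisms (compositions are straightforward).

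Having split sections, the two short exact sequences become internal semidirect product decompositions
\[
Aut^{\mathfrak{g}}_{\mathfrak{a}}(\mathfrak{g}\ltimes\mathfrak{a})\simeq Aut_{\mathfrak{g}}(\mathfrak{a})\ltimes Aut^{\mathfrak{a},\mathfrak{g}}(\mathfrak{g}\ltimes\mathfrak{a}),\quad Aut^{\mathfrak{a}}(\mathfrak{g}\ltimes\mathfrak{a})\simeq Aut^{\mathfrak{a}}(\mathfrak{g})\ltimes Aut^{\mathfrak{a},\mathfrak{g}}(\mathfrak{g}\ltimes\mathfrak{a})
\]
by the standard correspondence between split extensions of groups and semidirect products.

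The main obstacle, and really the only non-mechanical point, is verifying that the candidate sections $\widetilde{\phi}$ and $\widehat{\psi}$ are in fact Lie superalgebra automorphisms, particularly keeping track of the $\mathbb{Z}_2$-grading signs in the bracket of $\mathfrak{g}\ltimes\mathfrak{a}$; once this is pinned down, everything else is formal. The injectivity/surjectivity of the sections onto their respective images and the fact that the intersection with $Aut^{\mathfrak{a},\mathfrak{g}}(\mathfrak{g}\ltimes\mathfrak{a})$ is trivial will follow from the commutative diagram defining $\widetilde{res}$ and $\sigma$.
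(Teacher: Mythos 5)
Your proposal is correct and follows essentially the same route as the paper: both reduce to the short exact sequences obtained from the vanishing of $d$ and $\chi$ (since $\beta\equiv 0$ for a split extension) and then exhibit the sections $\phi\mapsto\bigl((g,a)\mapsto(g,\phi(a))\bigr)$ and $\psi\mapsto\bigl((g,a)\mapsto(\psi(g),a)\bigr)$, verifying they are Lie superalgebra homomorphisms via the $\mathfrak{g}$-module property of $\phi$ and the action-preserving property of $\psi$ respectively. No gaps.
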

\noindent The theorem says that certain automorphism groups of semidirect products are again semidirect products.

\begin{proof}
From the above discussion, corresponding to a semidirect product $\mathfrak{g} \ltimes\mathfrak{a}$ we have the abelian extension given in \eqref{semidirect product exact sequence}.

Since \eqref{semidirect product exact sequence} splits, the corresponding map $d:Aut_\mathfrak{g}(\mathfrak{a})\xrightarrow{} H^2(\mathfrak{g},\mathfrak{a})_0$ in \eqref{cor1} becomes trivial and \eqref{cor1} takes the following form in this case:
\begin{equation}
\label{1st split exact}
    1\rightarrow Aut^{\mathfrak{a},\mathfrak{g}}(\mathfrak{g} \ltimes\mathfrak{a})\xrightarrow{i} Aut^{\mathfrak{g}}_{\mathfrak{a}}(\mathfrak{g} \ltimes\mathfrak{a}) \xrightarrow{\widetilde{res}} Aut_\mathfrak{g}(\mathfrak{a})\to 0
\end{equation}
\noindent Now we prove that the above exact sequence also splits. For that let us consider the map $\epsilon :Aut_\mathfrak{g}(\mathfrak{a})\to Aut^{\mathfrak{g}}_{\mathfrak{a}}(\mathfrak{g} \ltimes\mathfrak{a})$ given by $\epsilon(\phi):=\gamma$ for $\phi \in Aut_\mathfrak{g}(\mathfrak{a}) $ where $\gamma$ is defined by $\gamma(g,a):=(g, \phi(a))$ for all $g\in \mathfrak{g},\, a \in \mathfrak{a}$. We need to check that $\gamma$ is indeed an element of $Aut^{\mathfrak{g}}_{\mathfrak{a}}(\mathfrak{g} \ltimes\mathfrak{a})$. Clearly $\gamma(\mathfrak{a})=\mathfrak{a}$ and $\gamma$ induces identity map on $\mathfrak{g}$. To prove that $\gamma$ is a homomorphism, let $x,y \in \mathfrak{g};a,b \in \mathfrak{a}$ be homogeneous elements. Then 
\begin{align*}
    \gamma([(x,a),(y,b)]) &= \gamma([x,y],\, x\cdot b - (-1)^{|y||a|} y \cdot a)\\
                          &= ([x,y],\, \phi(x\cdot b - (-1)^{|y||a|} y \cdot a))\\
                          &=([x,y],\,x\cdot\phi(b)- (-1)^{|y||a|}y \cdot \phi(a)) \text{      [as $\phi\in Aut_\mathfrak{g}(\mathfrak{a})$]}\\
                          &=[(x,\phi(a)),\,(y,\phi(b))]\\
                          &=[\gamma(x,a),\,\gamma(y, b)]
\end{align*}
So $\gamma$ is a Lie superalgebra homomorphism. Also since $\gamma$ is a bijection, $\phi$ being a bijection, $\gamma \in Aut^{\mathfrak{g}}_{\mathfrak{a}}(\mathfrak{g} \ltimes\mathfrak{a})$.
Moreover, one can easily check using \eqref{ker in cor1} that $\widetilde{res}\circ \epsilon$ is the identity map of $\mathfrak{g}$, so $\epsilon$ is a section. It is also easy to check that $\epsilon$ is a group homomorphism. Consequently the exact sequence \eqref{1st split exact} splits and the first isomorphism of the theorem follows.

Since \eqref{semidirect product exact sequence} splits we also have that the corresponding map $\chi: Aut^{\mathfrak{a}}(\mathfrak{g}) \to H^2(\mathfrak{g},\mathfrak{a})_0$ in \eqref{1st cor of theorem2} is trivial and the sequence \eqref{1st cor of theorem2} becomes
\begin{equation}
\label{2nd split exact}
    1\rightarrow Aut^{\mathfrak{a},\mathfrak{g}}(\mathfrak{g} \ltimes\mathfrak{a})\xrightarrow{i} Aut^{\mathfrak{a}}(\mathfrak{g} \ltimes\mathfrak{a}) \xrightarrow{\sigma} Aut^\mathfrak{a}(\mathfrak{g})\to 0.
\end{equation}
Again, to prove that \eqref{2nd split exact} splits we consider a map $\alpha:Aut^\mathfrak{a}(\mathfrak{g}) \to Aut^{\mathfrak{a}}(\mathfrak{g} \ltimes\mathfrak{a})$ defined by $\alpha(\psi):=\gamma$, here $\gamma$ is defined by $\gamma(g,a):=(\psi(g),a)$. Clearly $\gamma$ fixes $\mathfrak{a}$ pointwise and $\gamma$ is a bijection. To prove $\gamma$ is a homomorphism, let us again take  homogeneous elements $x,y \in \mathfrak{g};a,b \in \mathfrak{a}$. Then 
\begin{align*}
    \gamma([(x,a),(y,b)]) &= \gamma([x,y],\, x\cdot b - (-1)^{|y||a|} y \cdot a)\\
                          &= (\psi([x,y]),\, x\cdot b - (-1)^{|y||a|} y \cdot a)\\
                          &=([\psi(x), \psi(y)],\,\psi(x)\cdot b - (-1)^{|\psi(y)||a|}\psi(y) \cdot a) \text{      [as $\psi\in Aut^\mathfrak{a}(\mathfrak{g})$]   }\\
                          &=[(\psi(x),a),\,(\psi(y),b)]\\
                          &=[\gamma(x,a),\,\gamma(y, b)]
\end{align*}
So $\gamma$ is a homomorphism. In a similar way to the previous part one can also check that $\alpha$ is a section as well as a homomorphism. Therefore the sequence \eqref{2nd split exact} also splits and we have the second isomorphism of groups.
This completes the proof of the theorem.
\end{proof}
 The exact sequence \eqref{semidirect product exact sequence} which was playing the role of abelian extension in the above theorem was of very particular type. For a general abelian extension $0\to\mathfrak{a}\to\mathfrak{e}\to\mathfrak{g}\to 0$, the group isomorphisms of the above theorem might not hold true. For example, the first example below shows that first isomorphism of Theorem \ref{theorem3} does not hold true in general.
 
 In both the examples below the underlying field $F$ is of characteristic $0$, as was assumed earlier.
 \begin{example} 
 Consider the $3$-dimensional Heisenberg Lie algebra $\mathfrak{h}_3$ which is $span\{x,y,z\}$ as vector space in which the only non-zero bracket is $[x,y]=z$. Clearly the center $Z(\mathfrak{h}_3)=span\{z\}$. Also let $Ab(n)$ denotes the $n$-dimensional abelian Lie algebra. Then $\mathfrak{h}_3$ can be seen as $1$-dimensional central extension of the $2$-dimensional abelian Lie algebra in the following way where $q$ is the obvious quotient map:
 \begin{equation}
 \label{Heisenberg extension}
     0\to Z(\mathfrak{h}_3)\xrightarrow{i} \mathfrak{h}_3 \xrightarrow{q} Ab(2)\to 0
 \end{equation}
 In this case  $Aut^{Z(\mathfrak{h}_3),Ab(2)}(\mathfrak{h}_3)= Aut_{Z(\mathfrak{h}_3)}^{Ab(2)}(\mathfrak{h}_3)=\left\{\begin{pmatrix}
  1 & 0 & 0\\ 
  0 & 1 & 0 \\
  a & b & 1
\end{pmatrix} : a,b \in F \right \}\simeq F\oplus F$ and $Aut_{Ab(2)}(Z(\mathfrak{h}_3))=F^\ast$, the group of non-zero elements of $F$.
Now since $Aut_{Z(\mathfrak{h}_3)}^{Ab(2)}(\mathfrak{h}_3)$ is abelian, if $Aut_{Z(\mathfrak{h}_3)}^{Ab(2)}(\mathfrak{h}_3)$ is a semidirect product then it has to be the direct product in particular. But that would imply $ F\oplus F \simeq F^\ast \oplus F\oplus F $ which can not be true as the later has an element of order 2. So $Aut_{Z(\mathfrak{h}_3)}^{Ab(2)}(\mathfrak{h}_3)\not\simeq Aut_{Ab(2)}(Z(\mathfrak{h}_3)) \ltimes Aut^{{Z(\mathfrak{h}_3)},Ab(2)}(\mathfrak{h}_3)$.
\end{example}
 In the example above one checks that \eqref{Heisenberg extension} does not split and hence $\mathfrak{h}_3$ is not a semidirect product. However, it is not necessary for $\mathfrak{e}$ to be a semidirect product for the isomorphism $Aut^{\mathfrak{a}}(\mathfrak{e}) \simeq Aut^\mathfrak{a}(\mathfrak{g}) \ltimes Aut^{\mathfrak{a},\mathfrak{g}}(\mathfrak{e})$ to hold true. The next example shows that although $Aut^{\mathfrak{a}}(\mathfrak{e}) \simeq Aut^\mathfrak{a}(\mathfrak{g}) \ltimes Aut^{\mathfrak{a},\mathfrak{g}}(\mathfrak{e})$, $\mathfrak{e}\not\simeq \mathfrak{g}\ltimes\mathfrak{a}$.
 \begin{example}
 Let us consider the Lie superalgebra $\mathfrak{ba}_1$ having basis $\{x\,|\,y,z\}$ and the only non-zero bracket is given by $[x,y]=z$. This is the Lie superalgebra corresponding to $n=1$ from the family $\mathfrak{ba}_n$ given in \cite[Section 6]{HH}. Here the bar in the basis separates even elements from odd, $|x|=0$ and $|y|=|z|=1$. Also the center $Z(\mathfrak{ba}_1)=span\{z\}$. Now for simplicity rename $\mathfrak{ba}_1$ to be $\mathcal{H}$, then $\mathcal{H}$ can be seen as a central extension of an abelian Lie superalgebra in the following way:
 \begin{equation}
     0\to Z(\mathcal{H})\xrightarrow{i} \mathcal{H} \xrightarrow{q}  Ab(1|1)\to 0
 \end{equation}
 where $q$ is the obvious quotient map and $Ab(1|1)$ is the abelian Lie superalgebra of superdimension $(1,1)$. Then one has $Aut^{Z(\mathcal{H}), Ab(1|1)}(\mathcal{H})=\left\{\begin{pmatrix}
 \begin{array}{c|c c}
    1  &   &  \\
    \hline
       & 1 & 0\\
       & a & 1
 \end{array}
 \end{pmatrix}: a\in F \right\}$, $Aut^{Z(\mathcal{H})}(\mathcal{H})=\left\{\begin{pmatrix}
 \begin{array}{c|c c}
    b  &   &  \\
    \hline
       & c & 0\\
       & d & 1
 \end{array}
 \end{pmatrix}: b,c,d\in F; bc\neq 0 \right\}$ and \\
 $Aut^{Z(\mathcal{H})}({Ab(1|1)})=\left\{\begin{pmatrix}
 \begin{array}{c|c}
    e  &   \\
    \hline
       & f \\
      
 \end{array}
 \end{pmatrix}: e,f \in F;ef\neq 0 \right\}$.

It can now easily be checked that the following is an exact sequence of groups:
\begin{equation}
\label{Heisenberg split 2}
    1\to Aut^{Z(\mathcal{H}), Ab(1|1)}(\mathcal{H}) \xrightarrow{i} Aut^{Z(\mathcal{H})}(\mathcal{H}) \xrightarrow{\sigma} Aut^{Z(\mathcal{H})}({Ab(1|1)}) \to 1
\end{equation}
where $i$ is the obvious inclusion map and the map $\sigma$ can be given by 
$\sigma(\begin{pmatrix}
 \begin{array}{c|c c}
    b  &   &  \\
    \hline
       & c & 0\\
       & d & 1
 \end{array}
 \end{pmatrix})= \begin{pmatrix}
 \begin{array}{c|c}
    b  &   \\
    \hline
       & c \\
       \end{array}
 \end{pmatrix}$.
 Now consider the map $\epsilon:Aut^{Z(\mathcal{H})}({Ab(1|1)}) \to Aut^{Z(\mathcal{H})}(\mathcal{H})$ given by $\epsilon(\begin{pmatrix}
 \begin{array}{c|c}
    e  &   \\
    \hline
       & f \\
       \end{array}
 \end{pmatrix}$)= $\begin{pmatrix}
 \begin{array}{c|c c}
    e  &   &  \\
    \hline
       & f & 0\\
       & 0 & 1
 \end{array}
 \end{pmatrix}$. Then $\epsilon$ is a section of the map $\sigma$, also $\epsilon$ is a group homomorphism. Therefore \eqref{Heisenberg split 2} splits and consequently $ Aut^{Z(\mathcal{H})}(\mathcal{H}) \simeq Aut^{Z(\mathcal{H})}({Ab(1|1)}) \ltimes Aut^{Z(\mathcal{H}), Ab(1|1)}(\mathcal{H})$. However, $\mathcal{H}\not\simeq Ab(1|1) \ltimes Z(\mathcal{H})$ as that would imply $\mathcal{H}$ has a $2$-dimensional abelian subalgebra not containing the center.
 \end{example}

\end{section}

\section*{Acknowledgements}
Both the authors would like to thank Prof. Ian M. Musson for fruitful communications.

\section*{Disclosure statement}
There is no conflict of interest.

\section*{Funding}
The first author acknowledges Ph.D fellowship from NBHM, DAE, Govt of India and partial financial support from Shiv Nadar University.

\end{document}